\newcommand{\C}{{\mathbb C}}
\newcommand{\R}{{\mathbb R}}
\newcommand{\Q}{{\mathbb Q}}
\newcommand{\Z}{{\mathbb Z}}
\newcommand{\N}{{\mathbb N}}
\renewcommand{\Re}{\mathrm{Re}}
\renewcommand{\Im}{\mathrm{Im}}
\newcommand{\SL}{\mathrm{SL}}
\newcommand{\NN}{\mathcal{N}}
\newcommand{\OO}{\mathcal{O}}
\newcommand{\height}{\mathrm{h}}
\renewcommand{\mod}{\operatorname{mod}\nolimits}
\newcommand{\End}{\operatorname{End}\nolimits}
\renewcommand{\Im}{\operatorname{Im}\nolimits}
\newtheorem{theorem}{Theorem}[section]
\newtheorem{corollary}[theorem]{Corollary}
\newtheorem{lemma}[theorem]{Lemma}
\newtheorem{proposition}[theorem]{Proposition}
\newenvironment{proof}{{\textbf{Proof}}\par}{\hfill$\blacksquare$}
\begin{document}
	
	\title
	{Bounding the difference of two singular moduli}
	
	\author{Yulin Cai}
	
	\newcommand{\address}{{
			\bigskip
			\footnotesize
			Y.~Cai,
			\textsc{Institut de Math\'ematiques de Bordeaux, Universit\'e de Bordeaux 
				351, cours de la Lib\'eration 33405 Talence Cedex, France}\par\nopagebreak
			
			\textit{E-mail address}: \texttt{ylcai5388339@gmail.com}
	}}

	\maketitle
	
	\begin{abstract}
		For a fixed singular modulus $\alpha$, we give an effective lower bound of norm of $x-\alpha$ for another singular modulus $x$ with large discriminant. We then generalize this result for $\Phi_m(x,\alpha)$, where $\Phi_m(X,Y) \in \Z[X,Y]$ is the $m$-th modular polynomial.
	\end{abstract}
	
	{\footnotesize
		
		\tableofcontents
		
	}

	\section{Introduction}
	
	Let $\mathbb{H}$ be the upper half plane, a point $\tau \in \mathbb{H}$ is called a CM-point if $\End(E_\tau)$ is an order in an imaginary quadratic field, where $E_\tau$ is the elliptic curve over $\C$ corresponding to $\tau$. It is well-known that $\tau \in \mathbb{H}$ is CM if and only if $\tau$ is algebraic number of degree $2$. 
	We call $j(\tau)$ a singular modulus if $\tau \in \mathbb{H}$ is CM. From the classical CM-theory, we know that every singular modulus is an algebraic integer. We call  $j(\tau)$ a singular unit if it is a singular modulus and an algebraic unit.
	
	In \cite{habegger2015singular}, Habegger proved that there is at most finitely many singular units. However his proof is ineffective. After this, in \cite{bilu2017no}, Bilu, Habegger and K\"uhne  proved that there are no singular units. Indeed, their method can be generalized to give an effective bound of norm of difference between two singular moduli, that is exactly what we do in this paper. 
	
	Before stating our result, let us fix some notations. Given a singular modulus $x=j(\tau)$, the discriminant of $x$ is defined to be the discriminant of the order $\End(E_\tau)$ in an imaginary quadratic number field. We know that, in this case, $\End(E_\tau)$ is isomorphic to $\OO_\Delta: = \Z[(\Delta+\sqrt{\Delta})/2]$. By the CM-theory, the singular moduli of a given discriminant form a Galois orbit over $\Q$ of cardinality equal to the class number $\mathcal{C}(\Delta)$ of $\OO_\Delta$. For a number field $K$, and $\alpha\in K$, we denote $\NN_{K/\Q}(\alpha)$ the absolute norm of $\alpha$, and denote $\height(\alpha)$ the absolute height of $\alpha$.

	In this paper, we are going to prove the following result:
	
	\begin{theorem}
		Let $\alpha, x$ be two singular moduli of discriminants $\Delta_\alpha, \Delta$ respectively, and $K = \Q(\alpha,x)$. 
		\begin{enumerate}
			\item [(1)]
			If $\Delta_\alpha \not = -3, -4$ and $|\Delta| \geq \max\{e^{3.12}(\mathcal{C}(\Delta_\alpha)|\Delta_\alpha|^4e^{\height(\alpha)})^{3}, 10^{15}\cdot\mathcal{C}(\Delta_{\alpha})^6\},$
			then
			$$\log|\NN_{K/\mathbb{Q}}(x- \alpha)| > \dfrac{|\Delta|^{1/2}}{2};$$
			\item[(2)]
			If $\Delta_\alpha = -4$, i.e. $\alpha = 1728$, and $|\Delta| \geq 10^{15}$, then
			$$\log|\NN_{K/\mathbb{Q}}(x- 1728)| > \dfrac{2|\Delta|^{1/2}}{5};$$
			\item[(3)]
			If $\Delta_\alpha = -3$, i.e. $\alpha = 0$, and $|\Delta| \geq 10^{15}$, then
			$$\log|\NN_{K/\mathbb{Q}}(x)| > \dfrac{|\Delta|^{1/2}}{20}.$$
		\end{enumerate}
		\label{main theorem}
	\end{theorem}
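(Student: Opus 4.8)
The plan is to follow the method of Bilu--Habegger--K\"uhne \cite{bilu2017no}, keeping every constant explicit so that the estimates are uniform in $\Delta$. Let $T_\Delta$ denote a set of representatives, lying in the standard fundamental domain $\mathcal F$, of the CM points of discriminant $\Delta$; then $|T_\Delta|=h(\Delta)$, the conjugates of $x$ over $\Q$ are exactly the $j(\tau)$ with $\tau\in T_\Delta$, and similarly for $T_{\Delta_\alpha}$ and $\alpha$. Since $\Delta\neq\Delta_\alpha$ we have $x\neq\alpha$. Writing $\NN_{K/\Q}(x-\alpha)=\prod_{\sigma\colon K\hookrightarrow\C}(\sigma x-\sigma\alpha)$ and sorting the embeddings by the value $\sigma x$, each conjugate $j(\tau)$ of $x$ is realized by exactly $[K:\Q(x)]$ of them, and for each such $\sigma$ the number $\sigma\alpha$ is one of the $j(\tau')$, $\tau'\in T_{\Delta_\alpha}$. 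Since $\log|j(\tau)-\sigma\alpha|\ge\min_{\tau'\in T_{\Delta_\alpha}}\log|j(\tau)-j(\tau')|$ for each $\sigma$, this yields
\[
\log\bigl|\NN_{K/\Q}(x-\alpha)\bigr|\ \ge\ [K:\Q(x)]\cdot S,\qquad S:=\sum_{\tau\in T_\Delta}\ \min_{\tau'\in T_{\Delta_\alpha}}\log\bigl|j(\tau)-j(\tau')\bigr| ,
\]
and as $[K:\Q(x)]\ge1$ it suffices to prove $S>|\Delta|^{1/2}/2$ in cases (1),(2) and $S>|\Delta|^{1/2}/20$ in case (3).

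To bound $S$ from below I isolate the term of the principal form $(1,b_0,c_0)$ of discriminant $\Delta$. Its CM point $\tau_1$ has $\Im\tau_1=|\Delta|^{1/2}/2$, so $q_{\tau_1}=e^{2\pi i\tau_1}$ satisfies $|q_{\tau_1}|=e^{-\pi|\Delta|^{1/2}}$, and the $q$-expansion $j=q^{-1}+744+\cdots$, whose tail is bounded by an explicit absolute constant (at most $2079.1$) on $\mathcal F$, gives $|j(\tau_1)|\ge e^{\pi|\Delta|^{1/2}}-2079.1$. Every conjugate of $\alpha$ is at most a quantity $M_\alpha$ that is controlled by $|\Delta_\alpha|$ and $\height(\alpha)$; the hypotheses on $|\Delta|$ — in particular $|\Delta|\ge e^{3.12}\bigl(\mathcal{C}(\Delta_\alpha)|\Delta_\alpha|^4e^{\height(\alpha)}\bigr)^3$ in case (1) — are precisely what makes $M_\alpha$ negligible beside $e^{\pi|\Delta|^{1/2}}$, so that $\min_{\tau'}\log|j(\tau_1)-j(\tau')|\ge\pi|\Delta|^{1/2}-\varepsilon|\Delta|^{1/2}$ for an explicit small $\varepsilon$. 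Every $\tau\in T_\Delta$ with $\min_{\tau'}\log|j(\tau)-j(\tau')|\ge0$ contributes non-negatively to $S$ and may be discarded; what remains to control are the \emph{bad} $\tau$, those with $|j(\tau)-j(\tau')|<1$ for some $\tau'\in T_{\Delta_\alpha}$.

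The heart of the proof is the estimate of the bad contribution $\sum_{\tau\ \mathrm{bad}}\log\frac1{|j(\tau)-j(\tau')|}$. If $\tau$ is bad then $\tau$ lies within a bounded hyperbolic distance of some $\SL_2(\Z)$-translate $\gamma\tau'$ of some $\tau'\in T_{\Delta_\alpha}$ (the bound depending only on $\Delta_\alpha$ and $\height(\alpha)$), and there are at most $h(\Delta_\alpha)$ such clusters. Around one cluster I combine three effective ingredients: \emph{(i)} a local lower bound $|j(\tau)-j(\tau')|\ge c(\tau')\,|\tau-\gamma\tau'|^{e}$, in which $e=1$ unless $\tau'$ is $\SL_2(\Z)$-equivalent to $i$, when $e=2$ (the case $\alpha=1728$), or to $\zeta_3$, when $e=3$ (the case $\alpha=0$), and in which $c(\tau')$ is bounded below in terms of $\mathcal{C}(\Delta_\alpha)$; this is exactly why cases (2) and (3) must be separated and why (3) is weaker; \emph{(ii)} an effective separation $|\tau-\gamma\tau'|\gg(|\Delta||\Delta_\alpha|)^{-1}$ whenever $j(\tau)\neq j(\tau')$, obtained by writing both CM points as roots of integral binary quadratic forms and bounding below the real and imaginary parts of their difference; \emph{(iii)} an effective upper bound for the number of $\tau\in T_\Delta$ within a given distance $r$ of $\gamma\tau'$, obtained by counting reduced forms $(a,b,c)$ of discriminant $\Delta$ with $\tau$ near $\tau'$ and using an explicit bound on the number of solutions of $b^{2}\equiv\Delta\pmod{4a}$. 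Summing (i)--(iii) dyadically over the shells $r\in[2^{-k-1},2^{-k})$ bounds the bad contribution per cluster by an explicit constant times $e\cdot|\Delta|^{1/2}$; multiplying by $h(\Delta_\alpha)$ and absorbing the lower-order losses once $|\Delta|\ge10^{15}\mathcal{C}(\Delta_\alpha)^6$, the total bad contribution is less than $(\pi-\tfrac12)|\Delta|^{1/2}$ in cases (1),(2) and less than $(\pi-\tfrac1{20})|\Delta|^{1/2}$ in case (3). Hence $S\ge\pi|\Delta|^{1/2}-\varepsilon|\Delta|^{1/2}-(\text{total bad contribution})$ exceeds $|\Delta|^{1/2}/2$ (resp. $|\Delta|^{1/2}/20$), and $\log|\NN_{K/\Q}(x-\alpha)|\ge[K:\Q(x)]\,S\ge S$ completes the proof.

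The step I expect to be the genuine obstacle is ingredient \emph{(iii)}: effective equidistribution of CM points is not available (Duke's theorem being ineffective), so the ``no clustering'' count of discriminant-$\Delta$ CM points near a fixed point has to be produced directly from the arithmetic of binary quadratic forms. It is this count, together with the degeneracy of $j$ at $i$ and $\zeta_3$, that pins down the numerical thresholds ($e^{3.12}$, $10^{15}$, the powers of $\mathcal{C}(\Delta_\alpha)$) and forces the weaker exponent $1/20$ in case (3).
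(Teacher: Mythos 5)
Your opening reduction is where the argument breaks. You replace $\log|\NN_{K/\Q}(x-\alpha)|$ by $[K:\Q(x)]\,S$ with $S=\sum_{\tau\in T_\Delta}\min_{\tau'}\log|j(\tau)-j(\tau')|$, discard every non-negative term, keep only the principal-form contribution $\approx\pi|\Delta|^{1/2}$, and then need the total ``bad'' contribution to be $<(\pi-\tfrac12)|\Delta|^{1/2}$. That inequality is not just beyond your ingredients (i)--(iii); it is false for infinitely many $\Delta$. For a fixed conjugate $\alpha'$ of $\alpha$ the set $U=\{z\in\mathcal{F}:|j(z)-\alpha'|<\tfrac12\}$ has positive hyperbolic measure, and nothing prevents (indeed equidistribution forces) a fixed positive proportion of the $\mathcal{C}(\Delta)$ points of $T_\Delta$ to lie in $U$, each costing at least $\log 2$; since $\mathcal{C}(\Delta)$ exceeds any prescribed multiple of $|\Delta|^{1/2}$ for infinitely many $\Delta$ (class numbers of size $c|\Delta|^{1/2}\log\log|\Delta|$ occur), the bad contribution can exceed $\pi|\Delta|^{1/2}$ itself. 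Moreover, even per cluster your count (iii) — which is exactly the paper's Theorem \ref{bound of C}/Corollary \ref{bound of C2} — carries the unbounded factor $F\log\log|\Delta|$ with $F\geq18.54\log\log(|\Delta|^{1/2})$, so with a radius depending only on $\alpha$ the dyadic sum yields a bound of order $F\log\log|\Delta|\cdot|\Delta|^{1/2}$, never ``an explicit constant times $|\Delta|^{1/2}$''; and a per-cluster bound of const$\cdot|\Delta|^{1/2}$ multiplied by $\mathcal{C}(\Delta_\alpha)$ clusters would still not sit below $(\pi-\tfrac12)|\Delta|^{1/2}$. Enlarging $|\Delta|$ past $10^{15}\mathcal{C}(\Delta_\alpha)^6$ repairs none of this, because all these losses scale at least like $|\Delta|^{1/2}$ times unbounded factors.

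What is missing is the mechanism by which the paper retains the positive archimedean contribution of \emph{every} conjugate, not just the principal one. The paper writes $\frac1d\log|\NN_{K/\Q}(x-\alpha)|=\height(x-\alpha)-\frac1d\sum_k\log^+|x_k-\alpha_k|^{-1}$ (equation \ref{height}), bounds $\height(x-\alpha)\geq\height(x)-\height(\alpha)-\log2$, and uses \emph{both} lower bounds for $\height(x)$: the principal-form bound $\pi|\Delta|^{1/2}/\mathcal{C}(\Delta)$ and the per-conjugate bound $\frac{3}{\sqrt5}\log|\Delta|-9.79$ (Propositions \ref{lowerbound1}, \ref{lowerbound2}). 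The latter is precisely what beats the per-conjugate loss $\log(\varepsilon^{-1})+2\log|\Delta_\alpha|+O(1)$ charged to each conjugate that is not within $\varepsilon$ of a conjugate of $\alpha$; and the counting bound is applied only with the very small radius $\varepsilon\asymp d/(A\mathcal{C}(\Delta_\alpha)|\Delta|^{1/2}|\Delta_\alpha|^2)$, so that the $F|\Delta|^{1/2}\varepsilon^2\log\log|\Delta|^{1/2}$ and $F|\Delta|^{1/2}\varepsilon$ terms drop to $\leq0.0005$ and only the constant ``$+2$'' per cluster survives, producing the $8A\mathcal{C}(\Delta_\alpha)/d$ term of Corollary \ref{upperbound3}. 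Your proposal contains neither the per-conjugate height lower bound nor this small-radius choice, and without something playing their role the dyadic count cannot close the argument. A minor further point: the hypothesis $|\Delta|\geq e^{3.12}(\mathcal{C}(\Delta_\alpha)|\Delta_\alpha|^4e^{\height(\alpha)})^3$ does not serve to make the conjugates of $\alpha$ negligible against $e^{\pi|\Delta|^{1/2}}$; it enters only to guarantee $(\log\log|\Delta|+C)/\log|\Delta|\leq0.6$ with $C=\log\mathcal{C}(\Delta_\alpha)+4\log|\Delta_\alpha|+\height(\alpha)+1.04$ in the term-by-term estimation of the main inequality.
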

	In this theorem, the bound is effective.
	
	The idea of proving Theorem \ref{main theorem} is from \cite{bilu2017no}. Set $\zeta_3 = e^{2\pi i/3}$ and $\zeta_6 = e^{\pi i/3}$, let $\mathcal{F}$ be the standard fundamental domain in the Poincar\'e plane, that is, the open hyperbolic triangle with vertices $\zeta_3 , \zeta_6$ , and $i\infty$, together with the geodesics $[i, \zeta_6]$ and $[\zeta_6 , i\infty)$. Given $\varepsilon \in (0,1/4)$, and a point $\tau\in \mathcal{F}$, denote by $\mathcal{C}_\varepsilon(\tau,\Delta)$ the number of singular moduli of discriminant $\Delta$ which can be written $j(z)$ where $z\in\mathbb{H}$ satisfies $|z-\tau|<\varepsilon$. Firstly, we give an effective upper bound of $\mathcal{C}_\varepsilon(\tau, \Delta)$, see Corollary~\ref{bound of C2}. Then by using this bound and the lower bound for the difference of two singular moduli from \cite{bilu2019separating}, we manage to give an upper bound for the height of difference, see Corollary~\ref{upperbound3} in Section~\ref{section upperbound}. The lower bound for height of difference comes from \cite{bilu2017no}, see Section~\ref{section lowerbound}. With these two bounds, by estimating each term in the both sides, we deduce Theorem \ref{main theorem}, see Section~\ref{section proof1}, \ref{section proof2}, \ref{section proof3}. 
	
	Let us remark, since Bilu, Habegger and K\"uhne \cite{bilu2017no} have given most of results we need for the case where $\tau = \zeta_6$, i.e. $\Delta_\alpha = -3$ in Theorem \ref{main theorem} (3), we will use their result directly and focus mainly on the case where $\tau \not = \zeta_6$.
	
	There are other works about the norm of difference between two singular moduli. In fact, Gross and Zagier \cite{gross1984on} stated explicit formula for absolute norm of difference between two singular moduli. With their works, Li \cite{li2018singular} also managed to give a bound of norm of difference between two singular moduli, his bound is a strictly positive number, which allows him to prove a generalized version of the main result of Bilu, Habegger and K\"uhne \cite{bilu2017no}. Even more, he gave a bound for $\log|\NN_{K/\Q}(\Phi_m(x,\alpha))|$, where $\Phi_m(X,Y) \in \Z[X,Y]$ is the $m$-th modular polynomial. However, it is not clear how his bound behaves as $\Delta\to -\infty$.
	
	We can generalize our main result to give a bound for $\NN_{K/\Q}(\Phi_m(x,\alpha))$ when the discriminant $\Delta$ of $x$ is sufficiently large. Recall the definition of $\Phi_m(X,Y)$. For $z_1, z_2\in \mathbb{H}$, 
	\[\Phi_m(j(z_1),j(z_2)) = \prod\limits_{\gamma\in \SL_2(\Z)\backslash D_m}(j(z_1) - j(\gamma z_2)),\]
	where
	\[D_m: = \left\{\left(\begin{matrix}
		a & b\\
		c & d\\
	\end{matrix}\right) \in \mathrm{M}_2(\Z)\mid ad-bc = m\right\}.\]

We have the following corollary from Theorem~\ref{main theorem}.

\begin{corollary}
	Keep the notations in Theorem~\ref{main theorem}. Let $m \geq 1$ be an integer. If $\Delta$ is sufficiently large (in terms of $\alpha$ and $m$), then 
	\[\log|\NN_{K/\Q}(\Phi_m(x,\alpha))| > \frac{|\Delta|^{1/2}}{20}.\]
	\label{bound for modular polynomials}
\end{corollary}

Our result requires to fix a singular modulus $\alpha$,  one generalization of this work is to give an explicite lower bound for $\log|\NN_{K/\Q}(x-\alpha)|$ when both $\Delta_\alpha$ and $\Delta$ vary.

Another natural generalization is to give a non-trivial lower bound for $v_p(\NN_{K/\Q}(x-\alpha))$ in terms of their discriminants, where $v_p$ is the $p$-adic discrete valuation. The motivation for this is to find all singular $S$-units (that is, singular moduli that are $S$-units). Recall
that, given a finite set $S$ of prime numbers, an $S$-unit is an algebraic number whose denominator and numerator are composed of prime ideals dividing primes from $S$. Recently, Campagna \cite{campagna2020singular} showed that, if $S$ is the set of rational primes congruent to $1$ modulo $3$, then there are no singular $S$-units.  Herrero, Menares and Rivera-Letelier \cite{herrero2021there} proved that, given a singular modulus $\alpha$, there are only finitely many singular moduli $x$ such that $x-\alpha$ is an $S$-unit. In particular, if $\alpha = 0$, there are only finitely many singular $S$-units. However, their proof is ineffective. We expect a non-trivial lower bound for $v_p(\NN_{K/\Q}(x-\alpha))$ can provide an effective method to calculate all singular moduli $x$ such that $x-\alpha$ is an $S$-unit for a given singular modulus $\alpha$ and a finite set $S$ of primes.

	\section{General setting}
	
	For a number field $K$, $x\in K$, we denote by $\NN_{K/\Q}(x)$ the absolute norm of $x$.
	
	Let $\Delta$ be a negative integer satisfying $\Delta \equiv 0, 1 \mod 4$ and 
	$$\mathcal{O}_\Delta = \Z[(\Delta + \sqrt{\Delta})/2],$$
	the imaginary quadratic order of discriminant $\Delta$. We suppose that $D$ is the discriminant of $\Q(\sqrt{\Delta})$, and $f = [\mathcal{O}_D :\mathcal{O}_\Delta]$ is the conductor of $\mathcal{O}_\Delta$, so we have $\Delta = f^2D$. We also denote the class number of the order $\mathcal{O}_\Delta$ by $\mathcal{C}(\Delta)$, since $\height$ is used for height of an algebraic number. For further uses, we define the modified conductor $\tilde{f}$ of $\mathcal{O}_\Delta$ by 
	\begin{equation*}
		\tilde{f} =  
		\begin{cases}
			f, & D \equiv 1 \mod 4,\\
			2f, & D \equiv 0 \mod 4.
		\end{cases}
	\end{equation*}
	
	On the other hand, let $\mathcal{F}$ be the standard fundamental domain in the Poincar\'e plane, that is, the
	open hyperbolic triangle with vertices $\zeta_3 , \zeta_6$ , and $i\infty$, together with the geodesics $[i, \zeta_6]$ and $[\zeta_6 , i\infty)$; here $\zeta_3 = e^{2\pi i/3}$ and $\zeta_6 = e^{\pi i/3}.$ Then
	the Klein $j$-invariant $j: \mathbb{H} \rightarrow \C$ induces a bijection
	$$j: \mathcal{F} \rightarrow \C.$$ 
	For each CM-point $\tau$ in the standard fundamental domain $\mathcal{F}$, i.e. quadratic imaginary number in $\mathcal{F}$, the discriminant $\Delta_\tau$ of $\tau$ is defined to be the discriminant of the primitive polynomial of $\tau$ over $\Z$, it is also the discriminant of the order $\End(\C/\Lambda_\tau)$, i.e. $\End(\C/\Lambda_\tau) = \mathcal{O}_{\Delta_\tau}$, where $\Lambda_\tau$ is the lattice generated by $1$ and $\tau$. Since the $j$-invariant $j: \mathcal{F} \rightarrow \C$ is a bijection, we call $\Delta_\tau$ the discriminant of $\alpha = j(\tau)$, also denoted by $\Delta_\alpha$.
	
	By classical CM-theory, we know that $\Q(\sqrt{\Delta_\tau}, j(\tau))$ is the ring class field of $\Q(\sqrt{\Delta_\tau})$ for the order $\OO_{\Delta_\tau}$, hence $\Q(\sqrt{\Delta_\tau}, j(\tau))/ \Q(\sqrt{\Delta_\tau})$ is Galois and $\mathcal{C}(\Delta_\tau) = [\Q(\sqrt{\Delta_\tau}, j(\tau)): \Q(\sqrt{\Delta_\tau})] = [\Q(j(\tau)): \Q]$.

	For $n\in \N^+$, we denote
	$$\omega(n) = \sum\limits_{p|n}1,\ \ \ \sigma_0(n) = \sum\limits_{d|n}1,\ \ \ \sigma_1(n) = \sum\limits_{d|n}d.$$

	\section{An Estimate for $C_\varepsilon(\tau,\Delta)$ \label{section bound C} }
	
	For each $\tau \in \mathcal{F}$ and $\varepsilon \in (0,1/4)$, we define 
	$$S_\varepsilon(\tau,\Delta) = \{z\in \mathbb{H} \mid z\ \textrm{is a imaginary quadratic number of discriminant}\ \Delta \ \textrm{and}\ |z - \tau| < \varepsilon \},$$
	$$\mathcal{C}_\varepsilon(\tau,\Delta) = \#S_\varepsilon(\tau,\Delta),$$	
	here $\#$ means the cardinality of a set.
	
	Let $S_\Delta$ be the set of primitive positive definite forms of discriminant $\Delta$, that is, a quadratic form $ax^2+bxy+cy^2 \in S_\Delta$ if $a, b, c \in \Z$ and
	$$a > 0, \ \  \gcd(a,b,c) = 1, \ \ \Delta = b^2 - 4ac <0$$
	For $ax^2+bxy+cy^2 \in S_\Delta$, we set 
	$$\tau(a,b,c) = \frac{b+\sqrt{\Delta}}{2a}.$$
	then the map $ax^2+bxy+cy^2 \mapsto \tau(a,b,c)$ defines  a bijection  from $S_\Delta$ to the set of imaginary number of discriminant $\Delta$.
	
	We will prove the following theorem and corollary:
	
	\begin{theorem}
		Let $\tau \in \mathcal{F}$ and $\varepsilon \in (0,1/4)$, then
		$$\mathcal{C}_\varepsilon(\tau,\Delta) \leq F\times\left(\frac{48+16\sqrt{3}}{3}\frac{\sigma_1(\tilde{f})}{\tilde{f}}|\Delta|^{1/2}\varepsilon^2 + \frac{12+4\sqrt{3}}{3}|\Delta|^{1/2}\varepsilon + \frac{8|\Delta|^{1/4}}{(\sqrt{3} -1)^{1/2}}\sigma_0(\tilde{f})\varepsilon  + 2\right),$$
		\label{bound of C}
		where 
		\begin{equation}
			F = F(\Delta) = \max\{2^{\omega(a)}: a \leq |\Delta|^{1/2}\}.
			\label{F}
		\end{equation}
	\label{general bound for C2}
	\end{theorem}
	
	\begin{corollary}
		In the set-up of Theorem \ref{bound of C}, assume that $|\Delta|\geq 10^{14}$. Then
		$$\mathcal{C}_\varepsilon(\tau,\Delta) \leq F\times\left(46.488|\Delta|^{1/2}\varepsilon^2\log\log|\Delta|^{1/2} + 7.752|\Delta|^{1/2}\varepsilon  + 2\right)$$
		\label{bound of C2}
	\end{corollary}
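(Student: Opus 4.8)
The plan is to derive Corollary~\ref{bound of C2} from Theorem~\ref{bound of C} purely by bounding the arithmetic factors $\sigma_1(\tilde f)/\tilde f$, $\sigma_0(\tilde f)$, and the term $8|\Delta|^{1/4}(\sqrt 3-1)^{-1/2}\sigma_0(\tilde f)\varepsilon$ against the stated cleaner expressions, using $|\Delta|\geq 10^{14}$ together with the divisor bounds $\sigma_1(n)/n = \sum_{d\mid n}1/d = O(\log\log n)$ and $\sigma_0(n)=n^{o(1)}$. First I would record that $\tilde f \leq 2f \leq 2|\Delta|^{1/2}$ (since $\Delta=f^2 D$ and $|D|\geq 3$, indeed $f\leq |\Delta|^{1/2}/\sqrt 3$), so any quantity that is increasing in $\tilde f$ may be bounded by its value at an integer of size $\leq 2|\Delta|^{1/2}$.

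Next I would treat the three summands inside the parenthesis of Theorem~\ref{bound of C} one at a time. For the $\varepsilon^2$-term: the constant $(48+16\sqrt 3)/3$ multiplied by $\sup_{n}\sigma_1(n)/n$ over $n\leq 2|\Delta|^{1/2}$ must come out below $46.488$; here I would invoke the classical explicit Mertens/Robin-type inequality $\sigma_1(n)/n < e^{\gamma}\log\log n + 0.6483/\log\log n$ (valid for $n\geq 3$), plug in $n\leq 2|\Delta|^{1/2}\leq 2\cdot 10^{?}$... more precisely use $\log\log(2|\Delta|^{1/2})\leq \log\log|\Delta|^{1/2} + \text{(small)}$ and absorb the correction, so that $(48+16\sqrt3)/3\cdot\sigma_1(\tilde f)/\tilde f \leq 46.488\log\log|\Delta|^{1/2}$ for $|\Delta|\geq 10^{14}$. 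For the two linear-in-$\varepsilon$ terms I would combine $\frac{12+4\sqrt3}{3}|\Delta|^{1/2}\varepsilon$ with $\frac{8|\Delta|^{1/4}}{(\sqrt3-1)^{1/2}}\sigma_0(\tilde f)\varepsilon$; the point is that $|\Delta|^{1/4}\sigma_0(\tilde f) = o(|\Delta|^{1/2})$ since $\sigma_0(n)\ll n^{\epsilon}$, and quantitatively for $|\Delta|\geq 10^{14}$ one has $8(\sqrt3-1)^{-1/2}\sigma_0(\tilde f)|\Delta|^{1/4}\leq \big(7.752 - \tfrac{12+4\sqrt3}{3}\big)|\Delta|^{1/2}$, using an explicit bound such as $\sigma_0(n)\leq n^{1.066/\log\log n}$ (Nicolas–Robin) or the cruder $\sigma_0(n)\leq C n^{1/4}$-free estimate that suffices here because of the extra $|\Delta|^{1/4}$ saving. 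The constant $2$ passes through unchanged. Summing the three bounds gives exactly the claimed inequality, and since $F\geq 1$ the outer multiplication by $F$ is harmless.

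The main obstacle I anticipate is purely numerical bookkeeping: one must choose the explicit divisor-function inequalities carefully so that the stated decimal constants $46.488$ and $7.752$ genuinely work from $|\Delta|\geq 10^{14}$ onward, including the replacement of $\log\log(2|\Delta|^{1/2})$ and $\log\log(2|\Delta|^{1/2})$-type arguments by $\log\log|\Delta|^{1/2}$. Concretely, one checks $\log\log(10^{14})^{1/2} = \log\log(10^7) = \log(7\log 10)\approx 2.78$, so $1/\log\log|\Delta|^{1/2}$ is bounded by about $0.36$, and the Robin-type error term $0.6483/\log\log n$ contributes at most $\approx 0.23$ to $\sigma_1(n)/n$, which after multiplication by $(48+16\sqrt3)/3\approx 25.24$ is about $5.9$ — this must be shown to be absorbable into the gap $46.488 - 25.24\,e^{\gamma}\approx 46.488 - 44.95\approx 1.5$... which is in fact \emph{not} absorbable unless one is more careful, so the real content is that at $n\leq 2|\Delta|^{1/2}$ with $|\Delta|$ this large, $\log\log n$ is itself large enough that $e^{\gamma}\log\log n$ plus the error stays below $46.488/25.24\cdot\log\log|\Delta|^{1/2}$; equivalently one needs $e^{\gamma} + (\text{error})/\log\log n \leq 1.842\log\log|\Delta|^{1/2}/\log\log(2|\Delta|^{1/2})$, which holds comfortably because the ratio $\log\log(2|\Delta|^{1/2})/\log\log|\Delta|^{1/2}\to 1$. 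I would verify this monotonicity-plus-one-endpoint-check argument explicitly at $|\Delta|=10^{14}$ and note that all relevant functions are monotone thereafter, completing the proof.
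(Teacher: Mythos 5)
Your overall strategy is the same as the paper's: feed explicit bounds for $\sigma_1(\tilde f)/\tilde f$ and $\sigma_0(\tilde f)$ into Theorem~\ref{bound of C}, absorb the $|\Delta|^{1/4}\sigma_0(\tilde f)$ term into the linear-in-$\varepsilon$ term, and let the constant $2$ pass through. The paper does exactly this, but it does not reprove the divisor bounds: it quotes \cite[Lemma 2.8]{bilu2017no}, which states $\sigma_1(\tilde f)/\tilde f\le 1.842\log\log|\Delta|^{1/2}$ and $\sigma_0(\tilde f)\le|\Delta|^{0.192}$ for $|\Delta|\ge 10^{14}$; the constants $46.488=\frac{48+16\sqrt3}{3}\cdot 1.842$ and $7.752=\frac{12+4\sqrt3}{3}+1.442$ are tailored to that lemma. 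Your treatment of the $\sigma_0$ term is fine (Nicolas--Robin gives $\sigma_0(\tilde f)\le|\Delta|^{0.192}$ at the boundary, and since $\tilde f^2\mid\Delta$ you may even use $\tilde f\le|\Delta|^{1/2}$ rather than $2|\Delta|^{1/2}$), and it reproduces the paper's step $\frac{8}{(\sqrt3-1)^{1/2}}|\Delta|^{0.442}\le 1.442|\Delta|^{1/2}$.

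The genuine gap is in the $\sigma_1$ term, and your own arithmetic exposes it. Robin's inequality applied at the top of the range ($n$ near $|\Delta|^{1/2}=10^7$) gives $e^\gamma\log\log n+0.6483/\log\log n\approx 1.781\cdot 2.780+0.233\approx 5.18$, which exceeds $1.842\cdot 2.780\approx 5.12$; so the blanket Robin bound does \emph{not} yield the needed inequality at $|\Delta|=10^{14}$. The patch you propose, namely
$e^{\gamma}+0.6483/(\log\log n)^2\le 1.842\,\log\log|\Delta|^{1/2}/\log\log(2|\Delta|^{1/2})$,
is false precisely at the endpoint you say you would check: at $|\Delta|=10^{14}$ the left side is about $1.865$ while the right side is about $1.81$ (and for mid-sized $\tilde f$, where $(\log\log n)^2$ is small, it fails even more badly); the claim that it ``holds comfortably because the ratio tends to $1$'' is an asymptotic statement that does not help at $10^{14}$. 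The inequality $\sigma_1(\tilde f)/\tilde f\le 1.842\log\log|\Delta|^{1/2}$ is true, but proving it requires more than quoting Robin: e.g.\ a case split in which, for $\tilde f$ below some threshold, one bounds $\sigma_1(\tilde f)/\tilde f$ directly by the maximum over superabundant-type integers $\le|\Delta|^{1/2}$ (which is about $4.7$ for $n\le 10^7$, well under $5.12$), reserving Robin for $\tilde f$ large enough that $0.6483/(\log\log\tilde f)^2\le 1.842-e^\gamma$ (roughly $\tilde f\ge 2\cdot 10^{11}$). As written, your proof does not close; the quickest repair is to invoke \cite[Lemma 2.8]{bilu2017no} as the paper does, or to supply such a case analysis explicitly.
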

	
	\subsection{Some lemmas}
	
	We say that $d \in \Z$ is a quadratic divisor of $n \in \Z$ if $d^2|n$. We denote by $\gcd_2(m,n)$ the greatest common quadratic divisor of integers $m$ and $n$.  
	
	We will use the following lemmas.
	
	\begin{lemma}[\cite{bilu2017no}, Lemma 2.4]
		Let $a$ be a positive integer and $\Delta$ a non-zero integer. Then the set of $b \in \Z$ satisfying $b^2 \equiv \Delta \mod a$ consists of at most $2^{\omega(a/\gcd(a,\Delta))+1}$ residue classes modulo $a/\gcd_2(a,\Delta)$.
		\label{number of roots}
	\end{lemma}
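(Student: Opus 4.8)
The plan is to reduce to prime powers via the Chinese remainder theorem and then to count square roots modulo each prime power directly. Write $a=\prod_p p^{e_p}$ and put $s_p=v_p(\Delta)$. Then $\gcd(a,\Delta)=\prod_p p^{\min(e_p,s_p)}$ and, by the definition of the greatest common quadratic divisor, $\gcd_2(a,\Delta)=\prod_p p^{\min(\lfloor e_p/2\rfloor,\lfloor s_p/2\rfloor)}$; in particular $n:=a/\gcd(a,\Delta)$ and $m:=a/\gcd_2(a,\Delta)$ are divisors of $a$ whose $p$-adic valuations depend only on $e_p$ and $s_p$. Since $\Z/a\Z\cong\prod_p\Z/p^{e_p}\Z$ and the reduction $\Z/a\Z\to\Z/m\Z$ is the product of the reductions $\Z/p^{e_p}\Z\to\Z/p^{v_p(m)}\Z$, the set of residue classes modulo $m$ occupied by the solutions of $b^2\equiv\Delta\pmod a$ is the direct product, over $p\mid a$, of the corresponding sets of classes modulo $p^{v_p(m)}$ occupied by the solutions of $b^2\equiv\Delta\pmod{p^{e_p}}$. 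Hence it suffices to bound, for each prime $p\mid a$, the number $N_p$ of residue classes modulo $p^{v_p(m)}$ hit by such solutions, and then to multiply these bounds.

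Next I would carry out the local count, fixing $p$ and abbreviating $e=e_p$, $s=s_p$. If $e\le s$, the congruence reads $b^2\equiv 0\pmod{p^e}$, which forces $v_p(b)\ge\lceil e/2\rceil$; since here $v_p(m)=e-\min(\lfloor e/2\rfloor,\lfloor s/2\rfloor)=e-\lfloor e/2\rfloor=\lceil e/2\rceil$, the solutions fill exactly the single class $b\equiv 0\pmod{p^{\lceil e/2\rceil}}$, so $N_p=1$. If $e>s$, then $v_p(b^2-\Delta)\ge e>s$ forces $v_p(b^2)=s$; as $v_p(b^2)$ is even, this is possible only when $s$ is even (otherwise $N_p=0$), and then $v_p(b)=s/2$. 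Writing $b=p^{s/2}b'$ with $p\nmid b'$ and $\Delta=p^s\Delta'$ with $p\nmid\Delta'$, the congruence becomes $b'^2\equiv\Delta'\pmod{p^{\,e-s}}$ with $\Delta'$ a $p$-adic unit, which is well known to have at most $2$ solutions modulo $p^{e-s}$ when $p$ is odd and at most $4$ when $p=2$. Since in this case $v_p(m)=e-\min(\lfloor e/2\rfloor,s/2)=e-s/2$ (because $e>s$ gives $\lfloor e/2\rfloor\ge s/2$), the map $b'\mapsto p^{s/2}b'$ induces a bijection between solution classes of $b'$ modulo $p^{e-s}$ and classes of $b$ modulo $p^{\,e-s/2}=p^{v_p(m)}$. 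Therefore $N_p\le 2$ when $p$ is odd and $e>s$, and $N_p\le 4$ when $p=2$ and $e>s$.

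Finally I would assemble the estimate. The primes with $e_p\le s_p$ are precisely those not dividing $n$ and contribute factors $N_p=1$, while the primes with $e_p>s_p$ are precisely those dividing $n$. If $2\nmid n$, the product of the $N_p$ is at most $\prod_{p\mid n}2=2^{\omega(n)}\le 2^{\omega(n)+1}$; if $2\mid n$, it is at most $4\cdot\prod_{p\mid n,\;p\ \mathrm{odd}}2=4\cdot 2^{\omega(n)-1}=2^{\omega(n)+1}$. In either case the solutions of $b^2\equiv\Delta\pmod a$ occupy at most $2^{\omega(n)+1}=2^{\omega(a/\gcd(a,\Delta))+1}$ residue classes modulo $m=a/\gcd_2(a,\Delta)$, which is exactly the assertion.

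I expect the only genuinely delicate point to be the local count at $p=2$: the facts that an odd $2$-adic unit has at most $4$ square roots modulo $2^k$ (one modulo $2$, at most two modulo $4$, at most four modulo $2^k$ for $k\ge 3$, coming from the structure of $(\Z/2^k\Z)^\times$), and that the valuation identity $v_2(m)=e-s/2$ still holds in the relevant range — together with the bookkeeping by which the single extra factor of $2$ in $2^{\omega(n)+1}$ exactly absorbs this worst case at the prime $2$ while all remaining primes contribute at most $2^{\omega(n)}$.
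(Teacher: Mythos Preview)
Your argument is correct. The paper does not prove this lemma at all: it merely quotes the statement from \cite{bilu2017no} (Lemma~2.4) for the reader's convenience, so there is no ``paper's own proof'' to compare against. Your reduction to prime powers via the Chinese remainder theorem, followed by the local case split $e_p\le s_p$ versus $e_p>s_p$ and the standard count of square roots of a unit modulo $p^k$ (at most $2$ for odd $p$, at most $4$ for $p=2$), is the natural approach and is carried out cleanly; the bookkeeping identifying $v_p(m)=\lceil e_p/2\rceil$ in the first case and $v_p(m)=e_p-s_p/2$ in the second (when $s_p$ is even) is accurate, and the final assembly correctly absorbs the extra factor of $2$ at the prime $2$ into the ``$+1$'' in the exponent.
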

	
	\begin{lemma}
		Let $\alpha,\beta \in \R$ be such that $\alpha < \beta$, and $m$  a positive integer. Then every residue class modulo $m$ has at most $(\beta - \alpha)/m +1$ elements in the interval $[\alpha, \beta]$.
		\label{number in interval}
	\end{lemma}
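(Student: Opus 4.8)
The plan is to translate the counting problem into one about integers in a real interval. Fix a residue class modulo $m$, say $r + m\Z$ with $r \in \Z$. An element of this class lying in $[\alpha,\beta]$ has the form $r + km$ with $k \in \Z$, and the condition $\alpha \le r + km \le \beta$ is equivalent, after dividing by $m > 0$, to $(\alpha - r)/m \le k \le (\beta - r)/m$. Hence the number of elements of $r + m\Z$ in $[\alpha,\beta]$ equals the number of integers $k$ in the closed real interval $I = [(\alpha-r)/m,\ (\beta-r)/m]$, whose length is exactly $(\beta - \alpha)/m$.

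It therefore suffices to show that a closed interval $[u,v] \subseteq \R$ contains at most $v - u + 1$ integers. If $[u,v]$ contains no integer, this is immediate, since in our application $v - u = (\beta-\alpha)/m > 0$, so $v - u + 1 > 0$. Otherwise, let $k_0 = \lceil u \rceil$ be the smallest integer with $k_0 \ge u$; since $[u,v]$ meets $\Z$, we have $k_0 \le v$, and the integers in $[u,v]$ are precisely $k_0, k_0 + 1, \dots, k_0 + N - 1$ for some $N \ge 1$. The largest of these satisfies $k_0 + N - 1 \le v$, while $k_0 \ge u$; subtracting gives $N - 1 \le v - u$, that is, $N \le v - u + 1$. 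Applying this with $[u,v] = I$ yields the bound $(\beta-\alpha)/m + 1$.

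Since the argument is entirely elementary, there is no genuine obstacle. The only points requiring a little care are the direction of the rounding inequalities (using $\lceil u \rceil \ge u$) and the degenerate case where the class has no representative in $[\alpha,\beta]$, which is still covered because the hypothesis $\alpha < \beta$ together with $m \ge 1$ forces the claimed upper bound $(\beta-\alpha)/m + 1$ to be strictly positive.
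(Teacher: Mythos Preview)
Your proof is correct and is the standard elementary argument. The paper itself does not prove this lemma; it merely quotes it from \cite{bilu2017no} without proof, so there is nothing in the paper to compare your argument against.
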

	
	\begin{lemma}
		Let $\tau \in \mathcal{F}$, and $\varepsilon \in (0, 1/4)$, and let $ax^2+bxy+cy^2 \in S_\Delta$ be such that $|\tau(a,b,c) - \tau| < \varepsilon$. Then
		\begin{equation}
			\frac{|\Delta|^{1/2}}{2(\Im\tau + \varepsilon)} < a < \frac{|\Delta|^{1/2}}{2(\Im\tau - \varepsilon)},
			\label{bound a1}
		\end{equation}
		\begin{equation}
			2a(\Re\tau - \varepsilon) < b < 2a(\Re\tau + \varepsilon).
			\label{bound b1}
		\end{equation}		
	\end{lemma}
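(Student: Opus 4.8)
The plan is to translate the geometric condition $|\tau(a,b,c)-\tau|<\varepsilon$ into constraints on the integers $a$ and $b$, using only the definitions of $\tau(a,b,c)$ and of $\mathcal F$. Write $\tau(a,b,c)=\frac{b}{2a}+\frac{\sqrt{|\Delta|}}{2a}i$, so that $\Re\tau(a,b,c)=b/(2a)$ and $\Im\tau(a,b,c)=|\Delta|^{1/2}/(2a)$. The hypothesis $|\tau(a,b,c)-\tau|<\varepsilon$ forces, in particular, $|\Im\tau(a,b,c)-\Im\tau|<\varepsilon$ and $|\Re\tau(a,b,c)-\Re\tau|<\varepsilon$, since each coordinate distance is bounded by the Euclidean distance. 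The first of these reads $\Im\tau-\varepsilon<|\Delta|^{1/2}/(2a)<\Im\tau+\varepsilon$; to invert this I need $\Im\tau-\varepsilon>0$, which is where I must invoke that $\tau\in\mathcal F$ (so $\Im\tau\geq\sqrt3/2$) together with $\varepsilon<1/4<\sqrt3/2$, guaranteeing the denominators stay positive. Taking reciprocals (and being careful that the inequalities flip) yields exactly \eqref{bound a1}.

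For \eqref{bound b1}, once $a$ is pinned down I use the second coordinate inequality $|b/(2a)-\Re\tau|<\varepsilon$; multiplying through by $2a>0$ gives $2a(\Re\tau-\varepsilon)<b<2a(\Re\tau+\varepsilon)$, which is the claim. So the argument is essentially: project the disc condition onto the two coordinate axes, then solve each one-variable inequality, using positivity of $\Im\tau-\varepsilon$ to justify dividing.

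The only real subtlety — and it is minor — is making sure the strict inequalities are handled correctly when passing to reciprocals: from $0<\Im\tau-\varepsilon<|\Delta|^{1/2}/(2a)<\Im\tau+\varepsilon$ one gets $\frac{1}{\Im\tau+\varepsilon}<\frac{2a}{|\Delta|^{1/2}}<\frac{1}{\Im\tau-\varepsilon}$, and then multiplying by $|\Delta|^{1/2}/2>0$ preserves the direction. I would also note explicitly that $\tau\in\mathcal F$ gives $\Im\tau\geq \Im\zeta_6=\sqrt3/2$, so $\Im\tau-\varepsilon>\sqrt3/2-1/4>0$; this is the one place the hypothesis $\tau\in\mathcal F$ and the bound $\varepsilon<1/4$ both get used. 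There is no serious obstacle here — the lemma is a routine unpacking of definitions, and the proof should be only a few lines.
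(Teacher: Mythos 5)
Your proposal is correct and follows the same route as the paper: project the condition $|\tau(a,b,c)-\tau|<\varepsilon$ onto the real and imaginary coordinates, then solve the resulting one-variable inequalities for $a$ and $b$ (your extra care about $\Im\tau-\varepsilon>0$ when taking reciprocals is a detail the paper leaves implicit). No further comment is needed.
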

	\begin{proof}
		Set $z = \tau(a,b,c)$, then from $|z-\tau|< \varepsilon$, we have 
		$$|\Im z - \Im\tau| < \varepsilon,  \ \ |\Re z - \Re\tau| < \varepsilon,$$
		that is, 
		$$\left|\frac{|\Delta|^{1/2}}{2a} - \Im\tau\right| < \varepsilon,  \ \  \left|\frac{b}{2a} - \Re\tau\right| < \varepsilon,$$
		so we have (\ref{bound a1}) and (\ref{bound b1}). 
	\end{proof}
	
	\subsection{Proof of Theorem \ref{bound of C}}
	
	Set 
	$$I = \left(\frac{|\Delta|^{1/2}}{2(\Im\tau + \varepsilon)}, \frac{|\Delta|^{1/2}}{2(\Im\tau - \varepsilon)}\right),$$
	$$\tau(a,b,c) = \frac{b+\sqrt{\Delta}}{2a}.$$
	By Lemma~\ref{lemma 5.25}, if $\tau(a,b,c) \in S_\varepsilon(\tau,\Delta)$, then $a \in I$ and $b \in (2a(\Re\tau - \varepsilon), 2a(\Re\tau + \varepsilon)).$
	
	For a fixed $a$, by Lemma \ref{number of roots} and Lemma \ref{number in interval} and $\omega(a/\gcd(a,\Delta)) \leq \omega(a)$, there are at most $(4\varepsilon\gcd_2(a,\Delta) + 1)\cdot 2^{\omega(a)+1}$ possible $b$'s. Since $\varepsilon < 1/4, \Im\tau\geq \sqrt{3}/2$, then $\frac{|\Delta|^{1/2}}{2(\Im\tau - \varepsilon)} \leq |\Delta|^{1/2}$. Hence
	\begin{align*}
		\mathcal{C}_\varepsilon(\tau,\Delta) &\leq 8\varepsilon \sum\limits_{a\in I\cap\Z} \gcd{_2}(a,\Delta)\cdot 2^{\omega(a)} + 2\sum\limits_{a\in I\cap\Z}2^{\omega(a)}\\
		&\leq 8\varepsilon F\sum\limits_{a\in I\cap\Z}\gcd{_2}(a,\Delta) + 2F\#(I\cap\Z).
	\end{align*}
	Note that 
	$$\sum\limits_{a\in I\cap\Z}\gcd{_2}(a,\Delta) \leq \sum\limits_{d^2\mid\Delta}d\cdot \#(I\cap d^2\Z),$$
	and the length of $I$ is
	\begin{align*}
		\frac{|\Delta|^{1/2}}{2(\Im\tau - \varepsilon)} - \frac{|\Delta|^{1/2}}{2(\Im\tau + \varepsilon)}  & =|\Delta|^{1/2}\frac{\varepsilon}{(\Im\tau+\varepsilon)(\Im\tau - \varepsilon)}\\
		&\leq |\Delta|^{1/2}\frac{\varepsilon}{\sqrt{3}/2(\sqrt{3}/2 -1/2)}\\
		&= \frac{6+2\sqrt{3}}{3}|\Delta|^{1/2}\varepsilon.
	\end{align*}
	When $d > \frac{|\Delta|^{1/4}}{(\sqrt{3} -1)^{1/2}}$, we have $\frac{|\Delta|^{1/2}}{2(\Im\tau - \varepsilon)} < d^2$. Combine this with Lemma \ref{number in interval}, we have
	\begin{equation*}
		\#(I\cap d^2\Z) \leq 
		\begin{cases}
			\frac{6+2\sqrt{3}}{3}\frac{|\Delta|^{1/2}}{d^2}\varepsilon +1 & d\leq \frac{|\Delta|^{1/4}}{(\sqrt{3} -1)^{1/2}},\\
			0 & d > \frac{|\Delta|^{1/4}}{(\sqrt{3} -1)^{1/2}}.
		\end{cases}
	\end{equation*}
	Since $\Delta/\tilde{f}^2$ is square-free, so for a positive integer $d$, $d^2\mid\Delta$ if and only if $d\mid\tilde{f}$, hence
	\begin{align*}
		\sum\limits_{d^2\mid\Delta}d\cdot \#(I\cap d^2\Z) & \leq \sum\limits_{\substack{d\mid\tilde{f} \\ d\leq \frac{|\Delta|^{1/4}}{(\sqrt{3} -1)^{1/2}}}}d\left(\frac{6+2\sqrt{3}}{3}\frac{|\Delta|^{1/2}}{d^2}\varepsilon +1\right)\\
		&\leq \frac{6+2\sqrt{3}}{3}|\Delta|^{1/2}\varepsilon\sum\limits_{d\mid\tilde{f}}1/d + \sum\limits_{\substack{d\mid\tilde{f} \\ d\leq \frac{|\Delta|^{1/4}}{(\sqrt{3} -1)^{1/2}}}}d\\
		&\leq \frac{6+2\sqrt{3}}{3}\frac{\sigma_1(\tilde{f})}{\tilde{f}}|\Delta|^{1/2}\varepsilon + \frac{|\Delta|^{1/4}}{(\sqrt{3} -1)^{1/2}}\sigma_0(\tilde{f}).
	\end{align*}
	Again, by Lemma \ref{number in interval}, we have
	$$\#(I\cap\Z) \leq \frac{6+2\sqrt{3}}{3}|\Delta|^{1/2}\varepsilon + 1.$$
	Hence, 
	\begin{align*}
		\mathcal{C}_\varepsilon(\tau,\Delta) &\leq 8\varepsilon F\times\left(\frac{6+2\sqrt{3}}{3}\frac{\sigma_1(\tilde{f})}{\tilde{f}}|\Delta|^{1/2}\varepsilon + \frac{|\Delta|^{1/4}}{(\sqrt{3} -1)^{1/2}}\sigma_0(\tilde{f})\right) + 2F\times\left(\frac{6+2\sqrt{3}}{3}|\Delta|^{1/2}\varepsilon + 1\right)\\
		&\leq F\times\left(\frac{48+16\sqrt{3}}{3}\frac{\sigma_1(\tilde{f})}{\tilde{f}}|\Delta|^{1/2}\varepsilon^2 + \frac{12+4\sqrt{3}}{3}|\Delta|^{1/2}\varepsilon + \frac{8|\Delta|^{1/4}}{(\sqrt{3} -1)^{1/2}}\sigma_0(\tilde{f})\varepsilon  + 2\right).
	\end{align*} 
	
	\subsection{Proof of Corollary \ref{bound of C2}}
	The following lemma estimate $\sigma_0(\tilde{f})$ and  $\sigma_1(\tilde{f})$ in terms of $|\Delta|$:
	
	\begin{lemma}[\cite{bilu2017no}, Lemma 2.8]
		For $|\Delta|\geq 10^{14}$, we have
		\begin{equation*}
			\sigma_0(\tilde{f}) \leq |\Delta|^{0.192},
		\end{equation*}
		\begin{equation*}
			\sigma_1(\tilde{f})/\tilde{f} \leq 1.842\log\log|\Delta|^{1/2}.
		\end{equation*}
	\end{lemma}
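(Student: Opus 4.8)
The plan is to reduce both inequalities to estimates for $\sigma_0(\tilde f)$ and $\sigma_1(\tilde f)/\tilde f$ involving only two facts about $\Delta$: first $\tilde f^{\,2}\le|\Delta|$, which is immediate since $\Delta/\tilde f^{\,2}$ is a nonzero square-free integer; and second $|\Delta|\ge 10^{14}$. From the first, $\log\log\tilde f\le\log\log|\Delta|^{1/2}$ once $\tilde f\ge 3$; from the second, $\log\log|\Delta|^{1/2}\ge\log\log 10^{7}=2.780\ldots$ and $|\Delta|^{\,0.192-1/8}=|\Delta|^{\,0.067}\ge 10^{0.938}>8.6$. These are the only analytic inputs on $\Delta$ I will use; the arithmetic content is all about the single integer $\tilde f$.

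\emph{The bound on $\sigma_0$.} I would first prove the universal inequality $\sigma_0(n)\le C\,n^{1/4}$ for all $n\ge 1$, with an explicit constant $C<8.5$. Indeed $n\mapsto\sigma_0(n)n^{-1/4}$ is multiplicative, with local factor $g_p(a)=(a+1)p^{-a/4}$ at $p^a$; since $g_p(a)\le g_p(0)=1$ for all $a\ge 0$ as soon as $p^{1/4}\ge 2$, i.e.\ $p\ge 16$, we get $\sigma_0(n)n^{-1/4}\le\prod_{p\le 13}\max_{a\ge 0}g_p(a)=:C$, a finite product which a short computation evaluates to $\approx 8.45$ (the maximum is attained at $n=2^{5}3^{3}5^{2}\cdot 7\cdot 11\cdot 13$). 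Then, using $\tilde f\le|\Delta|^{1/2}$,
$$\sigma_0(\tilde f)\le C\,\tilde f^{1/4}\le C\,|\Delta|^{1/8}<8.6\,|\Delta|^{1/8}\le|\Delta|^{\,0.067}|\Delta|^{1/8}=|\Delta|^{\,0.192},$$
which is the claim. This part has ample slack and is routine.

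\emph{The bound on $\sigma_1/\tilde f$.} Here I would start from Robin's unconditional estimate $\sigma_1(m)/m<e^{\gamma}\log\log m+0.6483/\log\log m$ for $m\ge 3$. Put $u=\log\log\tilde f$; since $\log\log|\Delta|^{1/2}\ge\max(u,2.780)$, it suffices to show $\sigma_1(\tilde f)/\tilde f\le 1.842\max(u,2.780)$. Let $u_0=\bigl(0.6483/(1.842-e^{\gamma})\bigr)^{1/2}=3.262\ldots$, chosen so that $e^{\gamma}u+0.6483/u\le 1.842\,u$ precisely when $u\ge u_0$. If $u\ge u_0$ (so in particular $\tilde f\ge 3$), Robin's bound already gives $\sigma_1(\tilde f)/\tilde f<e^{\gamma}u+0.6483/u\le 1.842\,u\le 1.842\log\log|\Delta|^{1/2}$. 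If $u<u_0$, then $\tilde f<e^{e^{u_0}}<2.2\cdot 10^{11}$, a finite range: for $\tilde f<10^{7}$ one has $\sigma_1(\tilde f)/\tilde f<5\le 1.842\cdot 2.780$, since the least integer of abundancy index $\ge 5$ exceeds $10^{7}$; and for $10^{7}\le\tilde f<2.2\cdot 10^{11}$ one verifies $\sigma_1(\tilde f)/\tilde f\le 1.842\log\log\tilde f$ directly, which reduces to a check on the finitely many superabundant integers in this interval, using that $\sigma_1(m)/m\le\sigma_1(n_0)/n_0$ for $n_0$ the largest superabundant number $\le m$.

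\emph{The main obstacle.} It lies entirely in the $\sigma_1$-estimate. The constant $1.842$ exceeds $e^{\gamma}=1.781\ldots$ by only about $0.061$, so the margin over Robin's leading term is roughly $0.061\log\log|\Delta|^{1/2}$, which stays below Robin's secondary term $0.6483/\log\log\tilde f$ throughout $\log\log\tilde f\in[2.78,\,3.26]$. Hence Robin's inequality by itself does not close the argument, and one is forced into the explicit finite verification over superabundant numbers below $2.2\cdot 10^{11}$; pinning down the cut-off $u_0$ and carrying out that verification is the real work, but it is genuinely finite because $e^{e^{u_0}}$ is only of size $\sim 10^{11}$.
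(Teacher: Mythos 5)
The paper does not actually prove this statement: it is imported verbatim as Lemma~2.8 of \cite{bilu2017no}, so there is no in-paper argument to compare against, and your proposal is a genuinely independent, self-contained proof. Your $\sigma_0$ half is complete and correct: the multiplicative bound $\sigma_0(n)\le C n^{1/4}$ with $C=\prod_{p\le 13}\max_a (a+1)p^{-a/4}\approx 8.45$ checks out, and $8.45<10^{0.938}\le|\Delta|^{0.192-1/8}$ closes it (the source instead uses the Nicolas--Robin bound $\sigma_0(n)\le n^{1.5379\log 2/\log\log n}$, which gives $|\Delta|^{0.533/\log\log|\Delta|^{1/2}}\le|\Delta|^{0.192}$ more directly, but your route is equally valid and more elementary). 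Your $\sigma_1$ half correctly diagnoses the real difficulty: since $1.842-e^{\gamma}\approx 0.061$, Robin's inequality alone only suffices for $\log\log\tilde f\ge u_0\approx 3.26$, and the window $\tilde f<2.2\cdot 10^{11}$ must be handled by explicit computation. Your reduction of that window to superabundant numbers is the right mechanism (and the monotonicity bookkeeping — comparing $\sigma_1(n_0)/n_0$ against $1.842\log\log\max(n_0,10^7)$ — is sound).

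The only thing separating this from a finished proof is that the two computational inputs are asserted rather than verified: (i) that no integer below $10^7$ has $\sigma_1(n)/n\ge 5$ (true — the least such integer is $122522400=2^5\cdot3^2\cdot5^2\cdot7\cdot11\cdot13\cdot17\approx1.2\cdot10^8$, and the largest superabundant number below $10^7$, namely $8648640$, has abundancy $\approx4.74$), and (ii) that $\sigma_1(n)/n\le1.842\log\log n$ at every superabundant $n\in[10^7,2.2\cdot10^{11}]$ (spot checks, e.g.\ $21621600$ giving $4.86$ vs.\ $5.21$ and $160626866400$ giving $5.65$ vs.\ $5.99$, show it holds with a visible margin throughout). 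Neither is in doubt, and you have specified exactly what must be checked, so I would not call this a gap in the idea — but as written the proof is not complete until that finite list is actually run through. If you want to avoid the computation entirely, you would need either a stronger hypothesis on $|\Delta|$ or a sharper explicit form of Robin's theorem; with the stated constant $1.842$ and threshold $10^{14}$, some finite verification of this kind appears unavoidable.
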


	With this lemma, we have 
	$$\frac{48+16\sqrt{3}}{3}\frac{\sigma_1(\tilde{f})}{\tilde{f}} \leq \frac{48+16\sqrt{3}}{3} \cdot 1.842\log\log|\Delta|^{1/2} \leq 46.488\log\log|\Delta|^{1/2},$$
	$$\frac{8|\Delta|^{1/4}}{(\sqrt{3} -1)^{1/2}}\sigma_0(\tilde{f}) \leq \frac{8}{(\sqrt{3} -1)^{1/2}}|\Delta|^{0.442} \leq \frac{8}{(\sqrt{3} -1)^{1/2}\cdot 10^{0.812}}|\Delta|^{1/2}\leq 1.442|\Delta|^{1/2}.$$
	$$\frac{12+4\sqrt{3}}{3} + 1.442 \leq 7.752$$
	With these bounds and Theorem \ref{bound of C}, we have Corollary \ref{bound of C2}.
	
	\section{An Upper Bound for the Height of the difference of Singular Moduli \label{section upperbound}}
	
	Let $\alpha = j(\tau), x = j(z)$ be two different singular moduli with $\tau, z \in \mathcal{F}$, and $\Delta_\alpha$, $\Delta=\Delta_x$ be their discriminants respectively. Let $K = \Q(x-\alpha)$, $d = [K : \Q]$, then we have $K = \Q(\alpha, x)$, see \cite[Theorem 4.1]{faye2018fields}. Hence we can assume that $d = s\mathcal{C}(\Delta_\alpha)$, where $s = [K : \Q(\alpha)]$. %Notice that $\Q(\alpha)/\Q$ and $\Q(x)/\Q$ are Galois, so is $K/\Q$. 
	We suppose that the set of embeddings of $K$ to $\C$ is $\{\sigma_1, \cdots, \sigma_d\}$. For each $k$, set $\alpha_k = \sigma_k(\alpha) = j(\tau_k) $ with $\tau_k\in \mathcal{F}$,  and set $x_k = \sigma_k(x) = j(z_k)$ such that $z_k \in \mathbb{H}$ is the nearest point to $\tau_k$ among $\SL_2(\Z)z_k$ with respect to the absolute valuation on $\C$.  Then $\alpha_k \not= x_k$ for each $k$, and we have
	\begin{equation}
		\height(x - \alpha) = \height((x-\alpha)^{-1}) = \dfrac{1}{d}\sum\limits_{k=1}^{d}\log^+|x_k - \alpha_k|^{-1} + \frac{1}{d}\log|\NN_{K/\Q}(x-\alpha)|,
		\label{height}
	\end{equation}
	where $\log^+(\cdot) = \log\max\{1,\cdot\}$. 
	
	In this section, we are going to prove that following theorem and corollary:
	\begin{theorem}
		Let $\alpha = j(\tau), x = j(z)$ be two different singular moduli with $\tau, z \in \mathcal{F}$, and $\Delta_\alpha$, $\Delta=\Delta_x$  their discriminants respectively. Let $K = \Q(x-\alpha)$, $d = [K : \Q]$,
		\begin{enumerate}
			\item[(1)]	
			if $\tau \not= i, \zeta_6$ and $0<\varepsilon< \min\{\frac{1}{3|\Delta_\alpha|^2}, 10^{-8}\}$, then
			\begin{align*}
				\height(x - \alpha) &\leq \sum\limits_{1 \leq k  \leq \mathcal{C}(\Delta_\alpha)}4\frac{\mathcal{C}_\varepsilon(\tau_k,\Delta)}{d}\log(\max\{|\Delta|, |\Delta_\alpha|\}) + \log(\varepsilon^{-1}) + 2\log|\Delta_\alpha|- 7.783 \\
				&\ \ \  + \frac{1}{d}\log|\NN_{K/\Q}(x-\alpha)|;
			\end{align*}
			\item[(2)] if $\tau = i$ and $0< \varepsilon \leq  7\cdot 10^{-3}$, then
			$$\height(x-1728) \leq 2\frac{\mathcal{C}_\varepsilon(i,\Delta)}{\mathcal{C}(\Delta)}\log|\Delta| + 2\log\varepsilon^{-1} - 9.9 + \frac{1}{\mathcal{C}(\Delta)}\log|\NN_{K/\Q}(x-1728)|.$$
		\end{enumerate}
		\label{upperbound}
	\end{theorem}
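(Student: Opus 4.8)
The starting point is the decomposition \eqref{height}, which already isolates the term $\frac{1}{d}\log|\NN_{K/\Q}(x-\alpha)|$ that occurs on the right-hand side of both assertions. So the whole task is to bound the archimedean sum $\frac{1}{d}\sum_{k=1}^{d}\log^+|x_k-\alpha_k|^{-1}$. I would split the indices $k\in\{1,\dots,d\}$ into the \emph{near} ones, for which $|z_k-\tau_k|<\varepsilon$, and the \emph{far} ones, for which $|z_k-\tau_k|\ge\varepsilon$, and treat the two resulting partial sums by entirely different means: the far part is controlled by a distance estimate for $j$, and the near part by a counting argument (fed by Corollary \ref{bound of C2}) together with an input from outside.

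For the far indices there are at most $d$ of them, so it suffices to bound a single term $\log^+|x_k-\alpha_k|^{-1}$ by $\log(\varepsilon^{-1})+2\log|\Delta_\alpha|-7.783$ (respectively by $2\log\varepsilon^{-1}-9.9$ when $\tau=i$), then sum over the far indices and divide by $d$; for this last step one must also check, using $\varepsilon<\min\{\frac{1}{3|\Delta_\alpha|^2},10^{-8}\}$ (respectively $\varepsilon\le 7\cdot 10^{-3}$), that this per-term quantity is $\ge 0$, and in case (2) the value $7\cdot10^{-3}$ is essentially the largest for which this holds. The per-term estimate comes from a lower bound for $|j(z_k)-j(\tau_k)|$ when $z_k$ is the point of its $\SL_2(\Z)$-orbit nearest to $\tau_k$ and $|z_k-\tau_k|\ge\varepsilon$. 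If $\tau_k\ne i,\zeta_6$ (automatic in case (1), since all conjugates of $\tau$ have discriminant $\Delta_\alpha\ne-3,-4$), then $j$ is unramified at $\tau_k$ and the two ingredients are: an effective lower bound of the shape $|j'(\tau_k)|\gg|\Delta_\alpha|^{-2}$ for CM points of discriminant $\Delta_\alpha$ (here one controls how close $\tau_k$ can lie to the ramification points $i,\zeta_6$ and how large its imaginary part can be), which is where the $2\log|\Delta_\alpha|$ enters; and the fact that $\varepsilon<\frac{1}{3|\Delta_\alpha|^2}$ forces the linear term of the Taylor expansion of $j$ at $\tau_k$ to dominate the remainder on the disc of radius $\varepsilon$, so that $|j(z_k)-j(\tau_k)|\gg|\Delta_\alpha|^{-2}\varepsilon$ when $\varepsilon\le|z_k-\tau_k|$ is not too large, while for $z_k$ far from $\tau_k$ one uses the elementary fact that a nearest representative cannot make $j$ arbitrarily close to a fixed value $j(\tau_k)$. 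When $\tau=i$ one has $j'(i)=0\ne j''(i)$, so the same argument with the quadratic term replaces $|z_k-i|$ by $|z_k-i|^2$, which produces the coefficient $2$ in front of $\log\varepsilon^{-1}$.

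For the near indices I would first bound their number by $\sum_{1\le k\le\mathcal{C}(\Delta_\alpha)}\mathcal{C}_\varepsilon(\tau_k,\Delta)$, where now $\tau_1,\dots,\tau_{\mathcal{C}(\Delta_\alpha)}$ denote the distinct conjugates of $\tau$: for each such $\tau_k$, the $s=d/\mathcal{C}(\Delta_\alpha)$ values $x_m$ attached to indices $m$ with $\tau_m=\tau_k$ are the conjugates of $x$ over $\Q(\alpha)$ and hence pairwise distinct (because $[K:\Q(\alpha)]=s$), so the corresponding nearest representatives $z_m$ are pairwise distinct points of $\mathbb{H}$, and those within distance $\varepsilon$ of $\tau_k$ are by definition counted by $\mathcal{C}_\varepsilon(\tau_k,\Delta)$. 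For a near term no distance estimate is available, so instead I would invoke the effective lower bound of \cite{bilu2019separating} for the absolute value of the difference of two singular moduli of discriminants $\Delta$ and $\Delta_\alpha$, which in the ranges of $|\Delta|$ we are in yields $\log^+|x_k-\alpha_k|^{-1}\le 4\log\max\{|\Delta|,|\Delta_\alpha|\}$ (respectively $\le 2\log|\Delta|$ when $\tau=i$, since there $|\Delta_\alpha|=4$). Summing this over the near indices, dividing by $d$, and adding back the far contribution and the norm term gives exactly the inequalities of the theorem.

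The main obstacle is the far-term estimate with its explicit constants: it requires a genuinely uniform lower bound for $|j(z)-j(\tau)|$ as $z$ runs over all nearest representatives with $|z-\tau|\ge\varepsilon$, interpolating between the local behaviour of $j$ near the (possibly ramified) point $\tau$ and its global behaviour on $\mathcal{F}$, and tracking the constants $e^{7.783}$ and $e^{9.9}$ through the lower bounds for $|j'(\tau)|$ and $|j''(i)|$ and through the Taylor remainder is the delicate bookkeeping. By contrast, given Corollary \ref{bound of C2} and the cited bound of \cite{bilu2019separating}, the near-term part is essentially a counting argument.
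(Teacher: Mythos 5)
Your overall strategy is the same as the paper's: start from the decomposition \ref{height}, split the conjugates into those with $|z_k-\tau_k|<\varepsilon$ and the rest, bound the near terms by the separation estimate of \cite{bilu2019separating} times a count of CM points in the $\varepsilon$-disc (this is where Corollary \ref{bound of C2} eventually enters), and bound each far term by a distance-type lower bound for $|j(z_k)-j(\tau_k)|$. Your counting justification for the near indices (distinct conjugates of $x$ over $\Q(\alpha)$ give distinct points $z_k$, hence at most $\mathcal{C}_\varepsilon(\tau_k,\Delta)$ of them are near $\tau_k$) and your remark that the far per-term bound must be checked to be nonnegative before multiplying by $d$ are both correct and in fact more careful than the paper's write-up.

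There are, however, two genuine problems. First, in case (2) your bound for the near terms is unjustified: from the separation bound $|x-\alpha|\geq 800\max\{|\Delta|,|\Delta_\alpha|\}^{-4}$ with $|\Delta_\alpha|=4$ you only get $\log^+|x_k-1728|^{-1}\leq 4\log|\Delta|$, not $2\log|\Delta|$; the statement "$\leq 2\log|\Delta|$ since $|\Delta_\alpha|=4$" does not follow. The coefficient $2$ requires the ramification-specific estimate $|j(z)-1728|\geq 2000|\Delta|^{-2}$ (Corollary 5.3 of \cite{bilu2019separating}, quoted in the paper as Lemma \ref{inequality for i}), which exploits that $i$ is a double point of $j$; with only $4\log|\Delta|$ per near term you would prove a weaker inequality than the one stated in (2). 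Second, the far-term per-term bounds with their explicit constants ($|x_k-\alpha_k|\geq 2400|\Delta_\alpha|^{-2}\varepsilon$ leading to $-7.783$, and $|x_k-1728|\geq 20000\varepsilon^2$ leading to $-9.9$) are not established in your proposal: you sketch a re-derivation via a lower bound $|j'(\tau_k)|\gg|\Delta_\alpha|^{-2}$ and Taylor expansion, but you explicitly leave the constant-tracking undone, whereas the theorem's conclusion consists precisely of these numerical constants. The paper avoids this work by quoting Propositions 4.1 and 4.2 (and Proposition 3.7) of \cite{bilu2019separating} (Lemmas \ref{4.3inequality} and \ref{inequality for i}) and then checking, using $\varepsilon<\min\{\tfrac{1}{3|\Delta_\alpha|^2},10^{-8}\}$ and $|\Delta_\alpha|\geq 7$, that the term linear in $\varepsilon$ is the smallest entry of the minimum; note also that the hypothesis on $\varepsilon$ is used for this comparison of terms, not (as you suggest) to make a Taylor linearization valid. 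To repair the proposal it suffices to replace your sketched far-term analysis and your case (2) near-term justification by these quoted estimates.
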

	
	We don't discuss the case where $\tau = \zeta_6$, since the bound for this case in the following corollary can be get directly from \cite{bilu2017no}. 
	
	\begin{corollary}
		In the setup of Theorem \ref{upperbound}, assume that $|\Delta| \geq 10^{14}$,
		\begin{enumerate}
			\item[(1)]
			if $\tau \not= i, \zeta_6$, then
			$$\height(x - \alpha) \leq \frac{8A\mathcal{C}(\Delta_\alpha)}{d} + \log(\frac{A\mathcal{C}(\Delta_\alpha)|\Delta|^{1/2}}{d})  + 4\log|\Delta_\alpha| + 0.33 +\frac{1}{d}\log|\NN_{K/\Q}(x-\alpha)|;$$
			\item[(2)] if $\tau = i$, then
			$$\height(x-1728) \leq  \frac{4A}{\mathcal{C}(\Delta)} + 2\log\frac{A|\Delta|^{1/2}}{\mathcal{C}(\Delta)} -2.68 + \frac{1}{\mathcal{C}(\Delta)}\log|\NN_{K/\Q}(x-1728)|;$$
			\item[(3)]if $\tau = \zeta_6$, then
			$$\height(x) \leq \dfrac{12A}{\mathcal{C}(\Delta)} + 3\log\frac{A|\Delta|^{1/2}}{\mathcal{C}(\Delta)} - 3.77 + \frac{1}{\mathcal{C}(\Delta)}\log|\NN_{K/\Q}(x)|,$$
		\end{enumerate}
		where $A= F\log\max\{|\Delta|,|\Delta_\alpha|\}$ and $F$ is defined in Theorem~\ref{general bound for C2}.
		\label{upperbound3}
	\end{corollary}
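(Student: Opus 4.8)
The plan is to derive Corollary \ref{upperbound3} from Theorem \ref{upperbound} by substituting the estimate for $\mathcal{C}_\varepsilon(\tau,\Delta)$ from Corollary \ref{bound of C2} and then optimizing the free parameter $\varepsilon$. First I would handle case (1), where $\tau \neq i, \zeta_6$. Here Theorem \ref{upperbound}(1) gives
\begin{equation*}
\height(x-\alpha) \leq \sum_{1\leq k\leq \mathcal{C}(\Delta_\alpha)} \frac{4\mathcal{C}_\varepsilon(\tau_k,\Delta)}{d}\log(\max\{|\Delta|,|\Delta_\alpha|\}) + \log\varepsilon^{-1} + 2\log|\Delta_\alpha| - 7.783 + \frac{1}{d}\log|\NN_{K/\Q}(x-\alpha)|.
\end{equation*}
Since each $\tau_k$ is a CM-point of discriminant $\Delta_\alpha$ and hence lies in $\mathcal{F}$ after translating, Corollary \ref{bound of C2} bounds each $\mathcal{C}_\varepsilon(\tau_k,\Delta)$ uniformly by $F(46.488|\Delta|^{1/2}\varepsilon^2\log\log|\Delta|^{1/2} + 7.752|\Delta|^{1/2}\varepsilon + 2)$. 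Writing $A = F\log\max\{|\Delta|,|\Delta_\tau|\}$, the sum over $k$ contributes roughly $\frac{4A\mathcal{C}(\Delta_\alpha)}{d}(46.488|\Delta|^{1/2}\varepsilon^2\log\log|\Delta|^{1/2} + 7.752|\Delta|^{1/2}\varepsilon + 2)$.

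The key move is the choice of $\varepsilon$. The dominant competing terms are the $|\Delta|^{1/2}\varepsilon$ (and $|\Delta|^{1/2}\varepsilon^2\log\log$) terms, which grow as $\varepsilon$ grows, versus the $\log\varepsilon^{-1}$ term, which grows as $\varepsilon$ shrinks. To make the $\varepsilon$-dependent contributions from $\mathcal{C}_\varepsilon$ of the same order as a single $\log$, one should take $\varepsilon$ of size roughly $\frac{d}{A\mathcal{C}(\Delta_\alpha)|\Delta|^{1/2}}$ (up to the $\log\log$ factor); with this choice the $|\Delta|^{1/2}\varepsilon^2\log\log$ term becomes negligible, the $|\Delta|^{1/2}\varepsilon$ term becomes $O(1)$ after multiplication by $\frac{A\mathcal{C}(\Delta_\alpha)}{d}$, and $\log\varepsilon^{-1} = \log\frac{A\mathcal{C}(\Delta_\alpha)|\Delta|^{1/2}}{d}$ up to an additive constant. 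One must check that this $\varepsilon$ satisfies the hypothesis $0<\varepsilon<\min\{\frac{1}{3|\Delta_\alpha|^2},10^{-8}\}$ of Theorem \ref{upperbound}(1): this is where the lower bound $|\Delta|\geq e^{3.12}(\mathcal{C}(\Delta_\alpha)|\Delta_\alpha|^4e^{\height(\alpha)})^3$ from the main theorem's hypotheses (and $|\Delta|\geq 10^{14}$ here) is needed, together with the trivial bounds $d\geq \mathcal{C}(\Delta_\alpha)$ and $F\geq 1$. After substituting, one collects the $8\cdot\frac{A\mathcal{C}(\Delta_\alpha)}{d}$ term coming from the $+2$ inside $\mathcal{C}_\varepsilon$ (times $4$, times $\log$, absorbed into $A$), the single logarithm $\log\frac{A\mathcal{C}(\Delta_\alpha)|\Delta|^{1/2}}{d}$, the $4\log|\Delta_\alpha|$ (combining $2\log|\Delta_\alpha|$ from the theorem with $2\log|\Delta_\alpha|$ absorbed from somewhere — actually the factor $\log\max\{|\Delta|,|\Delta_\alpha|\}\leq\log|\Delta|$ and bounding, so one keeps $2\log|\Delta_\alpha|$ and picks up more from rearranging), and tracks the numerical constant: $-7.783$ plus the logarithms of the absolute constants $46.488, 7.752$ and the $\log\log$ factor should net out to the stated $+0.33$.

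Cases (2) and (3) go the same way but are shorter. For (2), $\tau = i$, Theorem \ref{upperbound}(2) has a coefficient $2$ instead of $4$ and $2\log\varepsilon^{-1}$ instead of $\log\varepsilon^{-1}$, and $\mathcal{C}(\Delta_\alpha) = \mathcal{C}(\Delta)$ with $d = \mathcal{C}(\Delta)$ in the relevant normalization (since $\Delta_\alpha = -4$ forces $\tau_k = i$ for the single class), so substituting Corollary \ref{bound of C2} and choosing $\varepsilon \asymp \frac{\mathcal{C}(\Delta)}{A|\Delta|^{1/2}}$ yields $\frac{4A}{\mathcal{C}(\Delta)} + 2\log\frac{A|\Delta|^{1/2}}{\mathcal{C}(\Delta)} - 2.68 + \cdots$, with the constant $-2.68$ coming from $-9.9$ plus twice the logarithmic corrections. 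For (3), $\tau = \zeta_6$, we do not reprove anything: as remarked after Theorem \ref{upperbound}, the bound is quoted directly from \cite{bilu2017no}, and one only needs to restate it in the present normalization with $A = F\log\max\{|\Delta|,|\Delta_\tau|\}$; the coefficients $12$ and $3$ reflect the order-$6$ stabilizer of $\zeta_6$ in the relevant lattice, tripling the $i$ case's $4$ and $2$, and $-3.77$ is the corresponding constant. The main obstacle is purely bookkeeping: verifying that the chosen $\varepsilon$ meets the technical smallness hypotheses of Theorem \ref{upperbound} uniformly under the stated lower bound on $|\Delta|$, and carefully tracking the additive numerical constants so that the $\log\log|\Delta|^{1/2}$ factor (which is $\geq 1$ but unbounded) genuinely gets killed by the smallness of $\varepsilon^2$ rather than merely absorbed — i.e. ensuring the $|\Delta|^{1/2}\varepsilon^2\log\log|\Delta|^{1/2}$ term times $\frac{4A\mathcal{C}(\Delta_\alpha)}{d}$ is $\leq$ a small absolute constant, which again reduces to the hypothesis $|\Delta|$ large relative to $A\mathcal{C}(\Delta_\alpha)/d$.
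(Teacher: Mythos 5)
Your overall strategy is exactly the paper's: substitute Corollary \ref{bound of C2} into Theorem \ref{upperbound}, choose $\varepsilon$ so that the $\varepsilon$- and $\varepsilon^{2}$-terms become $O(1)$ while $\log\varepsilon^{-1}$ turns into the displayed logarithm, then track constants. The gap is in case (1), at the verification of the hypothesis of Theorem \ref{upperbound}(1), namely $0<\varepsilon<\min\{\frac{1}{3|\Delta_\alpha|^2},10^{-8}\}$. With your choice $\varepsilon\asymp\frac{d}{A\mathcal{C}(\Delta_\alpha)|\Delta|^{1/2}}$, the quantity $\varepsilon$ is (after using $d\leq\mathcal{C}(\Delta_\alpha)\mathcal{C}(\Delta)$ and the class number bound) of the order of an absolute constant divided by $F\,\frac{\log|\Delta|}{2+\log|\Delta|}$, hence independent of $|\Delta_\alpha|$; so the condition $\varepsilon<\frac{1}{3|\Delta_\alpha|^2}$ simply fails once $|\Delta_\alpha|$ is large, and cannot be rescued by shrinking the numerical constant. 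You patch this by invoking $|\Delta|\geq e^{3.12}(\mathcal{C}(\Delta_\alpha)|\Delta_\alpha|^4e^{\height(\alpha)})^{3}$, but that is a hypothesis of Theorem \ref{main theorem}, not of Corollary \ref{upperbound3}, whose only assumption is $|\Delta|\geq10^{14}$; as written you prove a conditional variant of the corollary rather than the stated one. The paper's fix is to build the missing factor into $\varepsilon$: it takes $\varepsilon=0.0003\frac{d}{A\mathcal{C}(\Delta_\alpha)|\Delta|^{1/2}|\Delta_\alpha|^{2}}$, so that $\varepsilon<\frac{1}{3|\Delta_\alpha|^2}$ reduces to $0.0003\frac{d}{A\mathcal{C}(\Delta_\alpha)|\Delta|^{1/2}}\leq\frac{1}{3}$, and the bound $\varepsilon\leq10^{-8}$ follows using $|\Delta_\alpha|\geq7$, all from $|\Delta|\geq10^{14}$ alone. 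This same choice settles the point you left as ``absorbed from somewhere'': $\log\varepsilon^{-1}=\log\frac{A\mathcal{C}(\Delta_\alpha)|\Delta|^{1/2}}{d}+2\log|\Delta_\alpha|+\log\frac{10000}{3}$, which together with the theorem's $2\log|\Delta_\alpha|-7.783$ (and the $0.001$ from the two small terms) yields precisely $4\log|\Delta_\alpha|+0.33$.

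A secondary inaccuracy: the bounds you list for the verification, $d\geq\mathcal{C}(\Delta_\alpha)$ and $F\geq1$, point the wrong way and are too weak to make $\varepsilon$ small. What is actually used is the upper bound $d\leq\mathcal{C}(\Delta_\alpha)\mathcal{C}(\Delta)$, Lemma \ref{yurilemma2} ($\mathcal{C}(\Delta)\leq\pi^{-1}|\Delta|^{1/2}(2+\log|\Delta|)$), and the lower bounds $F\geq256$ and $F\geq18.54\log\log|\Delta|^{1/2}$ from Lemma \ref{yurilemma1}; the latter are also what kill the $46.488\,|\Delta|^{1/2}\varepsilon^{2}\log\log|\Delta|^{1/2}$ term that you rightly flagged as the delicate one. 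Cases (2) and (3) are essentially fine: (2) matches the paper's choice $\varepsilon=0.3\frac{\mathcal{C}(\Delta)}{A|\Delta|^{1/2}}$ (the check $\varepsilon\leq7\cdot10^{-3}$ again uses $F\geq256$ and Lemma \ref{yurilemma2}), and (3) is, as you say, quoted from \cite{bilu2017no} with the norm term added.
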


	\subsection{Proof of Theorem \ref{upperbound}}
	
	The following lemmas and theorems are needed.
	
	\begin{lemma}
		In the set-up of Theorem \ref{upperbound},
		\begin{enumerate}
			\item [1)] if $\Im\tau \geq 1.3$, then there exists $z' \in \mathbb{H}$ with $x = j(z')$ such that
			$$|x - \alpha| \geq e^{2.6\pi}\min\{0.4 |z' - \tau|, 0.04\};$$
			\item [2)] if $\Im\tau \leq 1.3$ and $\tau \not = i, \zeta_6$, then there exist $z' \in \mathbb{H}$ with $x = j(z')$ such that
			$$|x - \alpha|  \geq \min\{5\cdot10^{-7}, 800|\Delta_\alpha|^{-4}, 2400|\Delta_\alpha|^{-2}|z'-\tau|\}.$$
			\label{4.3inequality}
		\end{enumerate}
	\end{lemma}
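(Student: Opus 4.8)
The plan is to estimate $|x-\alpha|$ from below by a local analysis of the $j$-function near $\tau$, splitting according to whether $\tau$ lies high in the fundamental domain (case 1) or in the bounded part (case 2). For case 1, with $\Im\tau\geq 1.3$, I would use the standard $q$-expansion $j(z)=q^{-1}+744+O(q)$ with $q=e^{2\pi i z}$: near such a $\tau$ the function $j$ behaves essentially like $q^{-1}$, so $|j(z')-j(\tau)|$ is comparable to $|q_{z'}^{-1}-q_\tau^{-1}|=|q_\tau|^{-1}|1-e^{2\pi i(\tau-z')}|$. Choosing $z'$ to be the $\SL_2(\Z)$-translate of $z$ nearest to $\tau$, one keeps $\Im z'$ not much smaller than $\Im\tau$, so $|q_\tau|^{-1}\geq e^{2\pi\cdot 1.3}=e^{2.6\pi}$, and the elementary inequality $|1-e^{2\pi i w}|\geq c\min\{|w|,1\}$ for $w$ in a suitable region gives $|1-e^{2\pi i(\tau-z')}|\geq 0.4|z'-\tau|$ when $|z'-\tau|$ is small and $\geq$ an absolute constant like $0.04$ otherwise; one also has to control the tail $O(q)$ and the fact that $q^{-1}$ is a local biholomorphism to pin down the numerical constants $0.4$ and $0.04$. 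The factor $e^{2.6\pi}$ is exactly $|q_\tau|^{-1}$ evaluated at the threshold.

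For case 2, with $\Im\tau\leq 1.3$ and $\tau\neq i,\zeta_6$, the point $\tau$ lies in a compact part of $\mathcal{F}$ away from the two elliptic points, so $j$ is a local biholomorphism at $\tau$ with $j'(\tau)\neq 0$; the issue is to make the lower bound on $|j'(\tau)|$ and the radius of the linearizing neighbourhood effective in terms of $\Delta_\alpha$. Since $\tau$ has discriminant $\Delta_\alpha$, its distance from $i$ and from $\zeta_6$ is bounded below by a power of $|\Delta_\alpha|$ — roughly $|\tau-i|,|\tau-\zeta_6|\gg|\Delta_\alpha|^{-1}$ from writing $\tau=(b+\sqrt{\Delta_\alpha})/(2a)$ with $a\leq|\Delta_\alpha|^{1/2}$ and comparing with the (irrational or different-discriminant) coordinates of $i$ and $\zeta_6$. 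Feeding this into bounds for $j'$ and its derivatives (e.g. via $j'=-2\pi i E_{14}/\Delta_{\mathrm{cusp}}$ type formulas, or directly via Cauchy estimates on a disc of radius $\sim|\Delta_\alpha|^{-1}$) yields $|j(z')-j(\tau)|\gg|\Delta_\alpha|^{-2}|z'-\tau|$ as long as $|z'-\tau|\ll|\Delta_\alpha|^{-2}$, and a uniform constant $5\cdot 10^{-7}$ when $z'$ stays a definite distance from $\tau$; the constants $800$ and $2400$ and the exponents $-4,-2$ come from tracking the second-order term in the Taylor expansion of $j$ (the $|\Delta_\alpha|^{-4}$ being the regime where the linear term still dominates the quadratic one).

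In both cases the final step is to replace $z$ by the minimizing translate $z'$: among all $w\in\SL_2(\Z)z$ one chooses the one minimizing $|w-\tau|$ (this is the $z'$ in the statement, matching the $z_k$ construction in the paragraph preceding Theorem \ref{upperbound}), and one checks that this choice is compatible with the $\Im z'$ lower bound used in case 1 and does not move $z'$ outside the linearizing disc in case 2. I expect the main obstacle to be the effective case 2 estimate: making every implied constant explicit requires a careful, slightly tedious bound on $\inf\{|j'(w)|:w\text{ near }\tau\}$ and on $\sup|j''|$ over an explicit disc whose radius degrades like $|\Delta_\alpha|^{-1}$, and then combining these with the arithmetic lower bound on the distance from $\tau$ to the elliptic points $i,\zeta_6$ — the bookkeeping that turns "local biholomorphism" into the clean polynomial-in-$|\Delta_\alpha|$ inequalities stated here.
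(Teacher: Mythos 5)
Your sketch is not what the paper does: the paper's entire proof of this lemma is a citation, namely ``Combine Proposition 4.1 and Proposition 4.2 in \cite{bilu2019separating}'', so the explicit constants are imported wholesale from Bilu--Faye--Zhu rather than derived. What you propose is essentially to reprove those two propositions from scratch, and your skeleton does match their structure (a $q$-expansion argument when $\Im\tau\geq 1.3$, and an effective local-inversion argument for $j$ near $\tau$ in the bounded region, with the powers of $|\Delta_\alpha|$ entering through the distance from $\tau$ to the elliptic points $i,\zeta_6$, where $j'$ vanishes). So the route is plausible, but as written it is a plan, not a proof.

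The genuine gap is that the lemma's entire content is its explicit numerical form, and none of it is established in your proposal: the constants $e^{2.6\pi}$, $0.4$, $0.04$, $5\cdot 10^{-7}$, $800$, $2400$ and the exponents $-4,-2$ are all asserted to ``come from tracking'' various terms, and you yourself defer the case-2 bookkeeping as the ``main obstacle''. Concretely, in case 1 you must control the sign and size of $\Im(\tau-z')$ before using $|1-e^{2\pi i(\tau-z')}|\geq c\min\{|\tau-z'|,1\}$, justify that the nearest $\SL_2(\Z)$-translate $z'$ keeps $\Im z'$ large enough for the $q$-expansion to dominate, and absorb the $744+O(q)$ terms with explicit error bounds; in case 2 you must actually prove the arithmetic separation $|\tau-i|,|\tau-\zeta_6|\gg|\Delta_\alpha|^{-1}$ (with an explicit constant, using $a\leq(|\Delta_\alpha|/3)^{1/2}$ and integrality of $b$), convert it into an explicit lower bound for $|j'|$ on an explicit disc (delicate precisely because $j'$ vanishes at $i$ and to second order at $\zeta_6$), and then balance linear against quadratic terms to get the stated thresholds. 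Without these computations the statement with its specific constants is not proved; either carry them out in full or do what the paper does and quote Propositions 4.1 and 4.2 of \cite{bilu2019separating} directly.
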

	\begin{proof}
		Combine Proposition 4.1 and Proposition 4.2 in \cite{bilu2019separating}.
	\end{proof}

	\begin{theorem}[\cite{bilu2019separating} Theorem 1.1]
		In the set-up of Theorem \ref{upperbound}, we have
		$$|x-\alpha| \geq 800\max\{|\Delta|,|\Delta_\alpha|\}^{-4}.$$
		\label{Yuri main theorem}
	\end{theorem}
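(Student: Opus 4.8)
\textbf{Proof strategy for Theorem \ref{Yuri main theorem}.}

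The plan is to reduce the stated global lower bound $|x-\alpha| \geq 800\max\{|\Delta|,|\Delta_\alpha|\}^{-4}$ to the local separation statements already recorded in Lemma \ref{4.3inequality}. Recall that in the set-up, $\tau, z \in \mathcal{F}$ and $\alpha = j(\tau)$, $x = j(z)$ are distinct singular moduli; the quantity $|x-\alpha|$ is intrinsic, so we are free to replace $z$ by any $\SL_2(\Z)$-translate $z'$ and to work with whichever representative makes $|z'-\tau|$ controllable. The key preliminary observation is that if $z, \tau$ are both in $\mathcal F$ and $\Im\tau$ is not too large, then one can always find a translate $z'$ of $z$ with $|z'-\tau|$ bounded below by an explicit negative power of $|\Delta|$: indeed two distinct CM points of discriminants $\Delta$ and $\Delta_\alpha$ in (a bounded neighbourhood of) the fundamental domain cannot be arbitrarily close, and the classical estimate gives $|z'-\tau| \gg \max\{|\Delta|,|\Delta_\alpha|\}^{-1}$ or so. I would isolate this as the first step, citing the relevant distance estimate from \cite{bilu2019separating} (the same source as Lemma \ref{4.3inequality}).

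The second step is a case split on $\Im\tau$, exactly matching the two cases of Lemma \ref{4.3inequality}. In the case $\Im\tau \geq 1.3$, apply part 1) of Lemma \ref{4.3inequality}: there is $z'$ with $x = j(z')$ and $|x-\alpha| \geq e^{2.6\pi}\min\{0.4|z'-\tau|, 0.04\}$. Plug in the lower bound for $|z'-\tau|$ from Step 1 and check that $e^{2.6\pi}\cdot 0.4 \cdot c\max\{|\Delta|,|\Delta_\alpha|\}^{-1} \geq 800\max\{|\Delta|,|\Delta_\alpha|\}^{-4}$ for the relevant range of discriminants (for small discriminants one checks a finite list, or absorbs it into the constant); since $e^{2.6\pi} \approx 3.7\times 10^3$ this is comfortable. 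In the case $\Im\tau \leq 1.3$ and $\tau \neq i, \zeta_6$, apply part 2): $|x-\alpha| \geq \min\{5\cdot 10^{-7}, 800|\Delta_\alpha|^{-4}, 2400|\Delta_\alpha|^{-2}|z'-\tau|\}$. Here the first two terms already dominate $800\max\{|\Delta|,|\Delta_\alpha|\}^{-4}$ directly (for $|\Delta_\alpha|\ge|\Delta|$, the $800|\Delta_\alpha|^{-4}$ term; for $|\Delta|>|\Delta_\alpha|$, one uses $5\cdot10^{-7}$ when $|\Delta|$ is moderate and otherwise the third term after inserting $|z'-\tau|\gg|\Delta|^{-1}$ from Step 1, giving $2400|\Delta_\alpha|^{-2}|\Delta|^{-1}\cdot c$, which exceeds $800|\Delta|^{-4}$ once $|\Delta|$ is large enough, with small cases handled separately).

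The remaining exceptional cases $\tau = i$ and $\tau = \zeta_6$ are handled by symmetry/direct input: for these two special CM points the sharper lower bounds $|x - 1728| \gg |\Delta|^{-?}$ and $|x| \gg |\Delta|^{-?}$ from \cite{bilu2019separating} (the propositions underlying Lemma \ref{4.3inequality} and its variants) are at least as strong as what is claimed, since $|\Delta_\alpha|$ is the absolute constant $4$ or $3$ and $\max\{|\Delta|,|\Delta_\alpha|\} = |\Delta|$. I would simply invoke the corresponding statements. The main obstacle, and where the bookkeeping lives, is Step 1 together with the small-discriminant cleanup: one must make the implicit distance estimate $|z'-\tau| \gg \max\{|\Delta|,|\Delta_\alpha|\}^{-1}$ genuinely effective with an explicit constant, and then verify that the finitely many pairs $(\Delta,\Delta_\alpha)$ not covered by the asymptotic inequalities still satisfy $|x-\alpha| \geq 800\max\{|\Delta|,|\Delta_\alpha|\}^{-4}$ — but since this theorem is quoted verbatim from \cite{bilu2019separating}, the honest write-up is to cite it and sketch the above reduction as the reason it holds in the present notation.
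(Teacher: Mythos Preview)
The paper gives no proof of this statement: it is quoted as Theorem~1.1 of \cite{bilu2019separating} and used as a black box. Your closing sentence already recognises this, and that is indeed the correct write-up here.

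Your attempted reduction to Lemma~\ref{4.3inequality} is a plausible outline of how the result is obtained in \cite{bilu2019separating}, but as stated it has a logical gap. The lemma only asserts the \emph{existence} of some $z'\in\SL_2(\Z)z$ satisfying the displayed inequality; to conclude, you need a lower bound on $|z'-\tau|$ for \emph{that same} $z'$. Your Step~1 says ``one can always find a translate $z'$ with $|z'-\tau|$ bounded below'', which is vacuous---any far-away translate will do. What is actually required is a lower bound on the \emph{minimum} over the orbit, $\min_{\gamma\in\SL_2(\Z)}|\gamma z-\tau|$, and this is a separate (though elementary) argument carried out in \cite{bilu2019separating}, not in the present paper. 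Also, the straightforward denominator estimate for two distinct CM points in $\mathcal F$ gives only $|z-\tau|\gg\max\{|\Delta|,|\Delta_\alpha|\}^{-2}$ in the worst case (when the real parts coincide), not $\max\{|\Delta|,|\Delta_\alpha|\}^{-1}$ as you write; fed into Lemma~\ref{4.3inequality} this still yields the exponent $-4$, but your numerics would need adjusting. For the purposes of this paper none of this matters: cite the theorem and move on.
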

	
	\begin{lemma}
		For $i \not = z\in \mathcal{F}$ with discriminant $\Delta$, we have
		$$|j(z) - 1728| \geq 20000\min\{|z-i|, 0.01\}^2,$$
		$$|j(z) - 1728| \geq 2000|\Delta|^{-2}.$$
		\label{inequality for i}
	\end{lemma}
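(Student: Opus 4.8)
The final statement to prove is Lemma~\ref{inequality for i}, which gives two lower bounds for $|j(z)-1728|$ when $z\in\mathcal{F}$ has discriminant $\Delta$ and $z\neq i$. This is the analogue, centered at the CM-point $i$, of Lemma~\ref{4.3inequality} and Lemma~\ref{Yuri main theorem}, which are centered at a general CM-point $\tau$.

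\textbf{Approach.} The plan is to exploit the local behavior of the $j$-function near $\tau=i$, where $j'(i)=0$ but $j''(i)\neq 0$, so that $j(z)-1728$ vanishes to order exactly $2$ at $z=i$. First I would write $j(z)-1728 = \tfrac{1}{2}j''(i)(z-i)^2 + O((z-i)^3)$ near $i$, and use an explicit Taylor/analytic estimate (the value $j''(i)$ is computable, and one can bound the error term on a fixed small disc around $i$, e.g.\ $|z-i|\le 0.01$) to get the first inequality $|j(z)-1728|\ge 20000\min\{|z-i|,0.01\}^2$. The constant $20000$ should come out of $\tfrac{1}{2}|j''(i)|$ minus a margin for the cubic and higher terms on the disc of radius $0.01$; for $z$ outside that disc but still in $\mathcal F$ one needs the bound to degrade gracefully, which is why the $\min$ with $0.01$ appears — here one uses that $j$ is a proper map on $\mathcal F$ and $|j(z)-1728|$ is bounded below on $\mathcal F\setminus\{|z-i|<0.01\}$ by a constant exceeding $20000\cdot 0.01^2 = 2$.

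\textbf{The second inequality} then follows by combining the first with a lower bound on the distance $|z-i|$ when $z$ is a CM-point of discriminant $\Delta$ with $z\neq i$: one shows $|z-i|\gg |\Delta|^{-1}$. This is the standard "repulsion" estimate — if $z=\tau(a,b,c)$ with $b^2-4ac=\Delta$ and $z\neq i$, then $z$ and $i$ are distinct algebraic numbers, and the product of $z-i$ over conjugates (or a direct computation with the coefficients $a,b,c$, using $|\Delta|^{1/2}=2a\Im z$ and $\Re z = b/(2a)$) gives $|z-i|\ge c/a \ge c'|\Delta|^{-1/2}\cdot|\Delta|^{-1/2}=c'|\Delta|^{-1}$ after accounting for how close $a$ can be to $|\Delta|^{1/2}/2$ and $b$ to $0$. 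Plugging $|z-i|\ge c'|\Delta|^{-1}$ (with $|z-i|\le 0.01$, which holds once $|\Delta|$ is large, and can be checked directly for small $\Delta$) into the first inequality yields $|j(z)-1728|\ge 20000\,c'^2|\Delta|^{-2}$, and one arranges the constants so that $20000\,c'^2 \ge 2000$. Most likely this is already recorded in \cite{bilu2019separating} (Proposition 4.1/4.2 specialized to $\tau=i$, together with the bound $\mathcal C_\varepsilon$ machinery), so the cleanest route is to cite those propositions and specialize, exactly as in the proof of Lemma~\ref{4.3inequality}.

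\textbf{Main obstacle.} The delicate point is pinning down the explicit numerical constants ($20000$, $0.01$, $2000$) so that they are simultaneously valid on the near-$i$ disc (via the Taylor expansion with a rigorous tail bound) and on the complement (via compactness / properness of $j$ on $\mathcal F$), and then checking the finitely many small discriminants $\Delta$ by hand where the asymptotic repulsion bound $|z-i|\ge c'|\Delta|^{-1}$ might not yet beat $0.01$ or where $z$ could be anomalously close to $i$. Sharp tracking of the cubic term $\tfrac{1}{6}j'''(i)(z-i)^3$ and the quartic remainder on $|z-i|\le 0.01$ is the computational heart; everything else is a routine specialization of the general CM-repulsion estimates already established in \cite{bilu2019separating}.
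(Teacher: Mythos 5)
Your proposal is essentially the paper's approach: the paper's entire proof is to combine Proposition 3.7 and Corollary 5.3 of \cite{bilu2019separating}, and your sketch both reconstructs the underlying argument there (the double zero of $j-1728$ at $i$ plus a CM-repulsion estimate and a check of small discriminants) and recommends exactly this citation. The only slip is the pointer: the relevant results in \cite{bilu2019separating} are Proposition 3.7 and Corollary 5.3, not Propositions 4.1/4.2, which are the ones used for Lemma~\ref{4.3inequality}.
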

	\begin{proof}
		Combine Proposition 3.7 and Corollary 5.3 in \cite{bilu2019separating}.
	\end{proof}
	
	We start to prove Theorem \ref{upperbound} (1). Let $\tau_k, z_k, \alpha_k, x_k$ be as the begining of this section. Then we have
	$$\sum\limits_{k=1}^{d}\log^+|x_k - \alpha_k|^{-1} = \sum\limits_{\substack{1 \leq k \leq d \\ z_k\in S_\varepsilon(\tau_k,\Delta)}}\log^+|x_k - \alpha_k|^{-1} + \sum\limits_{\substack{1 \leq k \leq d \\ z_k\not\in S_\varepsilon(\tau_k,\Delta)}}\log^+|x_k - \alpha_k|^{-1}$$
	
	For the first sum, by Theorem \ref{Yuri main theorem}, each term in the sum has
	$$\log^+|x_k - \alpha_k|^{-1} \leq \max\{0, 4\log(\max\{|\Delta|,|\Delta_\alpha|\}) - \log(800)\} \leq 4\log(\max\{|\Delta|,|\Delta_\alpha|\}),$$
	so we have
	\begin{equation}
		\sum\limits_{\substack{1 \leq k \leq d \\ z_k\in S_\varepsilon(\tau_k,\Delta)}}\log^+|x_k - \alpha_k|^{-1} \leq \sum\limits_{1 \leq k  \leq \mathcal{C}(\Delta_\alpha)}4\mathcal{C}_\varepsilon(\tau_k,\Delta)\log(\max\{|\Delta|, |\Delta_\alpha|\}).
		\label{4.1inequality1}
	\end{equation}
	
	For the second sum,  we claim that if $|z_k - \tau_k|\geq \varepsilon$, then
	$$|x_k-\alpha_k| \geq 2400|\Delta_\alpha|^{-2}\varepsilon.$$
	In fact, we can replace $\tau$ by $\tau_k$ and $z'$ by $z_k$ in Lemma \ref{4.3inequality} by the choice of $z_k$, i.e. $z_k$ is the nearest point to $x_k$ among $\SL_2(\Z)z_k\subset \mathbb{H}$ with respect to the absolute valuation, then 
	$$|x_k-\alpha_k| \geq \min\{e^{2.6\pi} \cdot 0.4 \varepsilon, 5\cdot 10^{-7}, 800|\Delta_\alpha|^{-4}, 2400|\Delta_\alpha|^{-2}\varepsilon\}.$$
	Notice that $|\Delta_\alpha| \geq 7$ and  $\varepsilon< \min\{\frac{1}{3|\Delta_\alpha|^2}, 10^{-8}\}$, then
	$$2400|\Delta_\alpha|^{-2}\varepsilon \leq 800|\Delta_\alpha|^{-4},$$
	$$2400|\Delta_\alpha|^{-2}\varepsilon \leq \frac{2400}{49}  \cdot 10^{-8} < 5\cdot 10^{-7},$$ 
	$$2400|\Delta_\alpha|^{-2}\varepsilon \leq \frac{2400}{49} \varepsilon \leq 1410\varepsilon \leq e^{2.6\pi} \cdot 0.4\varepsilon,$$
	so we have our claim. Hence
	$$\log^+|x_k - \alpha_k|^{-1} \leq \log\left(\frac{|\Delta_\alpha|^2}{2400}\varepsilon^{-1}\right) \leq  \log(\varepsilon^{-1}) + 2\log|\Delta_\alpha|- 7.783,$$
	\begin{equation}
		\sum\limits_{\substack{1 \leq k \leq d \\ z_k\not\in S_\varepsilon(\tau_k,\Delta)}}\log^+|x_k - \alpha_k|^{-1} \leq d(\log(\varepsilon^{-1}) + 2\log|\Delta_\alpha|- 7.783).
		\label{4.1inequality2}
	\end{equation}
	Combine  (\ref{4.1inequality1}), (\ref{4.1inequality2}) and the equality (\ref{height}), we have the bound in (1).
	
	For Theorem~\ref{main theorem} (2), the proof is similar as above. Since $j(\tau) = 1728$, then $d = \mathcal{C}(\Delta)$ and 
	$$\sum\limits_{k=1}^{\mathcal{C}(\Delta)}\log^+|x_k - 1728|^{-1} = \sum\limits_{\substack{1 \leq k \leq \mathcal{C}(\Delta) \\ z_k\in S_\varepsilon(i,\Delta)}}\log^+|x_k - 1728|^{-1} + \sum\limits_{\substack{1 \leq k \leq \mathcal{C}(\Delta) \\ z_k\not\in S_\varepsilon(i,\Delta)}}\log^+|x_k - 1728|^{-1} $$
	
	For the first sum, by Lemma \ref{inequality for i},
	$$\log^+|x_k - 1728|^{-1} \leq \max\{0, 2\log|\Delta| - \log 2000\} \leq 2\log|\Delta|,$$
	$$\sum\limits_{\substack{1 \leq k \leq \mathcal{C}(\Delta) \\ z_k\in S_\varepsilon(i,\Delta)}}\log^+|x_k - 1728|^{-1}\leq 2\mathcal{C}_\varepsilon(i, \Delta)\log|\Delta|.$$
	
	For the second sum, since $\varepsilon \leq 7\cdot 10^{-3}$, $\varepsilon^{-2} > 20000$ and $|z_k - i| \geq \varepsilon$, we have
	$$|x_k - 1728|^{-1} \leq 20000^{-1}\min\{\varepsilon, 0.01\}^{-2} = 20000^{-1}\varepsilon^{-2},$$
	$$\log^+|x_k - 1728|^{-1} \leq \max\{0, 2\log\varepsilon^{-1} - \log(20000)\} \leq 2\log\varepsilon^{-1} - 9.9,$$
	$$\sum\limits_{\substack{1 \leq k \leq \mathcal{C}(\Delta) \\ z_k\not\in S_\varepsilon(i,\Delta)}}\log^+|x_k - 1728|^{-1} \leq \mathcal{C}(\Delta)(2\log\varepsilon^{-1} - 9.9).$$
	Hence, as above, we have 
	$$\height(x-1728) \leq 2\frac{\mathcal{C}_\varepsilon(i,\Delta)}{\mathcal{C}(\Delta)}\log|\Delta| + 2\log\varepsilon^{-1} - 9.9 + \frac{1}{\mathcal{C}(\Delta)}\log|\NN_{K/\Q}(x-1728)|.$$
	\subsection{Proof of Corollary \ref{upperbound3}}
	
	We will use the following lemmas from \cite{bilu2017no}.
	
	\begin{lemma}[\cite{bilu2017no} Lemma 3.5]
		Assume that $|\Delta| \geq 10^{14}$. Then $F \geq |\Delta|^{0.34/\log\log(|\Delta|^{1/2})}$ and 
		$F \geq 18.54\log\log(|\Delta|^{1/2}).$
		\label{yurilemma1}
	\end{lemma}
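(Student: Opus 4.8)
Because $2^{\omega(a)}$ is large precisely when $a$ is built from many small primes, the cheapest integer with $\omega(a)=k$ is the $k$-th primorial, so the plan is to evaluate $F$ on a primorial and then count primes. Write $M=\log(|\Delta|^{1/2})=\tfrac12\log|\Delta|$, let $p_1<p_2<\cdots$ be the primes and $\theta(x)=\sum_{p\le x}\log p$ the Chebyshev function. Let $k=k(\Delta)$ be the largest integer with $p_1p_2\cdots p_k\le|\Delta|^{1/2}$, equivalently with $\theta(p_k)\le M$. Then $a=p_1\cdots p_k$ is admissible in the definition of $F$ and $\omega(a)=k$, so $F(\Delta)\ge 2^{k}$, and everything reduces to a lower bound for $k$ in terms of $M$.

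For that lower bound I would invoke explicit Chebyshev estimates of Rosser--Schoenfeld type. Since $\theta(x)<1.01624\,x$ for all $x>0$, the largest prime $q\le M/1.01624$ has $\theta(q)<q\le M$, hence $k\ge\pi(q)=\pi(M/1.01624)$; and since $\pi(x)>x/\log x$ for $x\ge 17$,
\[
k\ \ge\ \frac{M/1.01624}{\log(M/1.01624)}\ \ge\ \frac{M}{1.01624\,\log M}
\]
once $M/1.01624\ge 17$, i.e. for $|\Delta|$ past an explicit bound just above $10^{15}$. Feeding this into $F\ge 2^k$ gives
\[
F(\Delta)\ \ge\ \exp\!\left(\frac{\log 2}{1.01624}\cdot\frac{M}{\log M}\right)\ =\ |\Delta|^{\,c/\log\log(|\Delta|^{1/2})},\qquad c=\frac{\log 2}{2\cdot 1.01624}>0.34 ,
\]
which is the first assertion. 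The second, $F\ge 18.54\,\log\log(|\Delta|^{1/2})$, follows a fortiori: for large $|\Delta|$ the bound just obtained outgrows any power of $\log M$, while near $|\Delta|=10^{14}$ the two stated right-hand sides are essentially equal, so the inequality is either checked as a one-variable estimate or, more simply, deduced from the fact that on the initial range $F$ equals the constant $256$ (see below), which already dominates both. The finite range $10^{14}\le|\Delta|\lesssim 10^{15}$ excluded by the condition $M/1.01624\ge 17$ is handled by direct computation: there $|\Delta|^{1/2}$ lies in a bounded interval on which the optimal primorial is $2\cdot 3\cdots 19=9\,699\,690$, so $F=256$, and $256$ exceeds both right-hand sides throughout (the latter being increasing).

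The lemma is purely quantitative, so the only real difficulty is arithmetic bookkeeping: the target exponent $0.34$ sits very close to the ceiling $(\log 2)/2\approx0.3466$ that this primorial method can ever produce, so one must use sufficiently sharp explicit prime-counting inequalities---soft asymptotics lose too much---and align the cut-off between the computed base range and the asymptotic range so that the two parts meet. If one wants more room in the first inequality, replacing $1.01624$ by one of Dusart's sharper bounds $\theta(x)\le x\bigl(1+\varepsilon(x)\bigr)$ with $\varepsilon(x)\to 0$ widens the margin without changing the argument.
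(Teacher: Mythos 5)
The paper does not actually prove this lemma --- it is quoted verbatim from Bilu--Habegger--K\"uhne (their Lemma~3.5) --- and your primorial-plus-explicit-Chebyshev reconstruction is essentially the standard argument behind that cited result, with the constants working out: $\log 2/(2\cdot 1.01624)\approx 0.3410>0.34$, and on the initial range $10^{14}\le|\Delta|\le e^{2\cdot 17\cdot 1.01624}$ one has $F=2^{8}=256$ exactly (the ninth primorial $223092870$ exceeds $|\Delta|^{1/2}\le 3.2\cdot 10^{7}$), which dominates both right-hand sides there. Your treatment of the second inequality on the asymptotic range is stated a bit loosely (``outgrows any power of $\log M$''), but the required check --- $\exp(0.68M/\log M)\ge 18.54\log M$ at $M\approx 17.3$, i.e.\ roughly $62$ versus $53$, together with monotonicity of the difference --- is routine, so the proposal is sound.
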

	
	\begin{lemma}[\cite{bilu2017no} Lemma 3.6]
		For $\Delta \not = -3,-4$, we have
		$$\mathcal{C}(\Delta) \leq \pi^{-1}|\Delta|^{1/2}(2+\log|\Delta|).$$
		\label{yurilemma2}
	\end{lemma}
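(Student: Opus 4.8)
The plan is to derive this from the analytic class number formula for imaginary quadratic orders, together with a crude but just-sharp-enough bound on the relevant Dirichlet $L$-value at $s=1$. Write $\Delta=f^{2}D$ with $D,f$ as in the paper's notation, and let $\chi_D=\bigl(\tfrac{D}{\cdot}\bigr)$ be the Kronecker character modulo $|D|$, which is non-trivial since $D<0$. First I would recall the class number formula for the order $\mathcal O_\Delta$: combining Dirichlet's class number formula $h(D)=\tfrac{w_D|D|^{1/2}}{2\pi}L(1,\chi_D)$ with the conductor relation $\mathcal C(\Delta)=\tfrac{w_\Delta}{w_D}\,f\prod_{p\mid f}\bigl(1-\tfrac{\chi_D(p)}{p}\bigr)h(D)$ and the identity $f|D|^{1/2}=|\Delta|^{1/2}$, one gets
$$\mathcal C(\Delta)=\frac{w_\Delta\,|\Delta|^{1/2}}{2\pi}\,L_\Delta(1),\qquad L_\Delta(s):=L(s,\chi_D)\prod_{p\mid f}\Bigl(1-\frac{\chi_D(p)}{p^{s}}\Bigr)=\sum_{\substack{n\ge 1\\ \gcd(n,f)=1}}\frac{\chi_D(n)}{n^{s}},$$
the last equality by comparing Euler products. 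Since $\Delta\neq-3,-4$ the unit group $\mathcal O_\Delta^{\times}$ is $\{\pm1\}$, so $w_\Delta=2$, and the statement reduces to the single inequality $L_\Delta(1)\le 2+\log|\Delta|$.

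For that inequality, set $a_n=\chi_D(n)$ if $\gcd(n,f)=1$ and $a_n=0$ otherwise. Then $|a_n|\le1$, and $(a_n)$ is periodic with a period $M$ dividing $|D|\rad(f)$, so $M\le|D|\rad(f)\le|\Delta|$. Because $L_\Delta$ is entire (a non-trivial Dirichlet $L$-function times a finite Euler product), the periodic Dirichlet series $\sum_n a_n n^{-s}=L_\Delta(s)$ has vanishing residue at $s=1$, which forces $\sum_{n=1}^{M}a_n=0$. Identifying $L_\Delta(1)$ with the conditionally convergent series $\sum_{n\ge1}a_n/n$ (Abel summation), I would split
$$L_\Delta(1)=\sum_{n=1}^{M}\frac{a_n}{n}+\sum_{k\ge1}\ \sum_{kM<n\le(k+1)M}\frac{a_n}{n}.$$
The head is at most $\sum_{n=1}^{M}1/n\le1+\log M\le1+\log|\Delta|$. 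In the $k$-th block, subtracting the constant $1/((k+1)M)$ (allowed since the $a_n$ sum to zero over the block) bounds its absolute value by $\sum_{kM<n\le(k+1)M}\bigl(\tfrac1n-\tfrac1{(k+1)M}\bigr)\le M\bigl(\tfrac1{kM+1}-\tfrac1{(k+1)M}\bigr)\le\tfrac1{k(k+1)}$, and $\sum_{k\ge1}\tfrac1{k(k+1)}=1$. Hence $L_\Delta(1)\le2+\log|\Delta|$, which combined with the first paragraph yields $\mathcal C(\Delta)\le\pi^{-1}|\Delta|^{1/2}(2+\log|\Delta|)$.

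The only genuine input is the class number formula in the non-maximal case, in particular the Euler-factor correction $\prod_{p\mid f}(1-\chi_D(p)/p)$ and the bookkeeping $f|D|^{1/2}=|\Delta|^{1/2}$ that absorbs the conductor cleanly; I would sanity-check this against small examples such as $\Delta=-12$ and $\Delta=-16$. The other point to watch is squeezing the tail down to exactly $1$: that is what dictates subtracting the right endpoint $1/((k+1)M)$ in each block so as to get the telescoping bound $1/(k(k+1))$ rather than a larger constant — a weaker grouping would spoil the clean ``$2+\log|\Delta|$''. Everything else (convergence of $\sum a_n/n$, Abel's theorem, the estimate $\sum_{n\le M}1/n\le1+\log M$, and the residue computation for a periodic Dirichlet series) is routine. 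I do not expect a serious obstacle; the argument is essentially Dirichlet's formula plus a one-paragraph bound on an $L$-value.
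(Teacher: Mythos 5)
Your proof is correct; note that the paper itself gives no argument for this lemma but simply quotes it as Lemma 3.6 of Bilu--Habegger--K\"uhne, and your derivation (class number formula for the order $\mathcal{O}_\Delta$, reduction to $L_\Delta(1)\le 2+\log|\Delta|$, and the period-blocking bound on the conditionally convergent series) is essentially the standard proof underlying that citation. All the steps check out, including the conductor bookkeeping $f|D|^{1/2}=|\Delta|^{1/2}$, the bound $M\le |D|\operatorname{rad}(f)\le|\Delta|$ on the period, and the telescoping tail estimate summing to $1$.
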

	
	To prove  Corollary~\ref{upperbound3} (1), by Corollary \ref{bound of C2}, we have
	\begin{equation}
		\sum\limits_{1 \leq k  \leq \mathcal{C}(\Delta_\alpha)}4\frac{\mathcal{C}_\varepsilon(\tau_k,\Delta)}{d}\log\max\{|\Delta|, |\Delta_\alpha|\} \leq 4\frac{A\mathcal{C}(\Delta_\alpha) \left(46.488|\Delta|^{1/2}\varepsilon^2\log\log|\Delta|^{1/2} + 7.752|\Delta|^{1/2}\varepsilon  + 2\right)}{d}.
		\label{inequality3}
	\end{equation}
	
	We can take $\varepsilon = 0.0003\frac{d}{A\mathcal{C}(\Delta_\alpha)|\Delta|^{1/2}|\Delta_\alpha|^2}$, then 
	$\varepsilon \leq \min\{\frac{1}{3|\Delta_\alpha|^2}, 10^{-8}\}$. Indeed, $F \geq 256$ if $|\Delta| \geq 10^{14}$, and by Lemma \ref{yurilemma1} and Lemma \ref{yurilemma2}, we have
	$$0.0003\frac{d}{A\mathcal{C}(\Delta_\alpha)|\Delta|^{1/2}} \leq \frac{3\mathcal{C}(\Delta)}{10000F|\Delta|^{1/2}\log|\Delta|} \leq \frac{6+3\log(10^{14})}{10000\pi\log(10^{14})} \cdot \frac{1}{256} \leq \frac{1}{3},$$
	$$0.0003\frac{d}{A\mathcal{C}(\Delta_\alpha)|\Delta|^{1/2}|\Delta_\alpha|^2} \leq \frac{6+3\log(10^{14})}{490000\pi\log(10^{14})}\cdot \frac{1}{256} \leq 10^{-8}.$$
	
	We estimate each term in the left of (\ref{inequality3}) with our $\varepsilon$:
	\begin{align*}
		4\frac{46.488A\mathcal{C}(\Delta_{\alpha})|\Delta|^{1/2}\varepsilon^2\log\log|\Delta|^{1/2}}{d}& \leq  
		36\cdot 10^{-8} \cdot 46.488\frac{d\log\log|\Delta|^{1/2}}{A\mathcal{C}(\Delta_{\alpha})|\Delta|^{1/2}|\Delta_\alpha|^4}\\
		&\leq \frac{36\cdot 10^{-8} \cdot 46.488}{|\Delta_\alpha|^4} \frac{\log\log|\Delta|^{1/2}}{F} \frac{\mathcal{C}(\Delta)}{|\Delta|^{1/2}\log|\Delta|}\\
		&\leq \frac{36 \cdot 10^{-8} \cdot 48.488 \cdot (2+ \log(10^{14}))}{18.54 \cdot \pi\log(10^{14})} \cdot \frac{1}{7^4}\\
		&\leq 0.0005;
	\end{align*}
	\begin{align*}
		4\frac{7.752A\mathcal{C}(\Delta_{\alpha})|\Delta|^{1/2}\varepsilon}{d}& \leq  
		0.0003\cdot 31.008|\Delta_\alpha|^{-2}\\
		&\leq 0.0005.
	\end{align*}
	With above, we have
	\begin{align*}
		\height(x-\alpha) &\leq \frac{8A\mathcal{C}(\Delta_\alpha)}{d} + \log(\frac{A\mathcal{C}(\Delta_\alpha)|\Delta|^{1/2}|\Delta_\alpha|^2}{d})  + 2\log|\Delta_\alpha| + 0.001 + \log(\frac{10000}{3}) - 7.783\\ & \ \  \ +\frac{1}{d}\log|\NN_{K/\Q}(x-\alpha)|\\
		& \leq \frac{8A\mathcal{C}(\Delta_\alpha)}{d} + \log(\frac{A\mathcal{C}(\Delta_\alpha)|\Delta|^{1/2}}{d})  + 4\log|\Delta_\alpha| + 0.33 +\frac{1}{d}\log|\NN_{K/\Q}(x-\alpha)|.
	\end{align*}
	
	For  Corollary~\ref{upperbound3} (2), the proof is similar. We set $\varepsilon = 0.3\frac{\mathcal{C}(\Delta)}{A|\Delta|^{1/2}}$, then $\varepsilon \leq 7\cdot 10^{-3}$. Indeed, since $|\Delta| \geq  10^{14}$, so $F \geq 256$, hence
	$$0.3\frac{\mathcal{C}(\Delta)}{A|\Delta|^{1/2}}  \leq 0.3 \frac{\mathcal{C}(\Delta)}{|\Delta|^{1/2}\log|\Delta|}\cdot \frac{1}{F} \leq 0.3\frac{2+\log(10^{14})}{\pi\log(10^{14})} \cdot \frac{1}{256} \leq 5\cdot 10^{-4}.$$
	By Corollary \ref{bound of C2}, Theorem \ref{upperbound}(2), Lemma \ref{yurilemma1} and Lemma \ref{yurilemma2}, we have
	\begin{align*}
		\height(x - 1728) &\leq 2\frac{\mathcal{C}_\varepsilon(i,\Delta)}{\mathcal{C}(\Delta)}\log|\Delta| + 2\log\varepsilon^{-1} - 9.9 + \frac{1}{\mathcal{C}(\Delta)}\log|\NN_{K/\Q}(x-1728)|\\
		&\leq 2\frac{A\left(46.488|\Delta|^{1/2}\varepsilon^2\log\log|\Delta|^{1/2} + 7.752|\Delta|^{1/2}\varepsilon  + 2\right)}{\mathcal{C}(\Delta)}+ 2\log\varepsilon^{-1} - 9.9\\ 
		& \ \ \ + \frac{1}{\mathcal{C}(\Delta)}\log|\NN_{K/\Q}(x-1728)|\\
		& \leq 2\cdot 46.488 \cdot 0.3^2 \frac{\log\log|\Delta|^{1/2}}{F}\frac{\mathcal{C}(\Delta)}{|\Delta|^{1/2}\log|\Delta|} + 2\cdot 0.3\cdot 7.752 + \frac{4A}{\mathcal{C}(\Delta)} \\
		& \ \ \ + 2\log\frac{A|\Delta|^{1/2}}{\mathcal{C}(\Delta)} -2\log 0.3 - 9.9 + \frac{1}{\mathcal{C}(\Delta)}\log|\NN_{K/\Q}(x-1728)|\\
		& \leq  \frac{4A}{\mathcal{C}(\Delta)} + 2\log\frac{A|\Delta|^{1/2}}{\mathcal{C}(\Delta)} + 2\cdot 46.488 \cdot 0.3^2 \frac{2+\log(10^{14})}{18.54\cdot\pi\log(10^{14})} -2.84 \\ 
		& \ \ \   + \frac{1}{\mathcal{C}(\Delta)}\log|\NN_{K/\Q}(x-1728)|\\
		& \leq  \frac{4A}{\mathcal{C}(\Delta)} + 2\log\frac{A|\Delta|^{1/2}}{\mathcal{C}(\Delta)} -2.68 + \frac{1}{\mathcal{C}(\Delta)}\log|\NN_{K/\Q}(x-1728)|.
	\end{align*} 
	
	For  Corollary~\ref{upperbound3} (3), see \cite[Corollary 3.2]{bilu2017no}, without assuming that $x$ is a singular unit, we add the term $\frac{1}{\mathcal{C}(\Delta)}\log|\NN_{K/\Q}(x)|.$
	\section{Lower Bounds for the Height of a Singular Modulus \label{section lowerbound}}
	
	We have these propositions from \cite{bilu2017no}:
	\begin{proposition}[\cite{bilu2017no} Proposition 4.1]
		Let $x$ be a singular modulus of discriminant $\Delta$. Assume that $|\Delta| \geq 16.$
		Then
		$$\height(x) \geq \dfrac{\pi|\Delta|^{1/2}-0.01}{\mathcal{C}(\Delta)}.$$
		\label{lowerbound1}
	\end{proposition}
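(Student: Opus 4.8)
The plan is to combine the standard height formula for algebraic integers with the $q$-expansion of $j$. Since $x$ is an algebraic integer of degree $\mathcal{C}(\Delta)$ over $\Q$, its finite places contribute nothing to $\height(x)$, and its archimedean conjugates are precisely the numbers $j(\tau')$ with $\tau'$ ranging over the $\mathcal{C}(\Delta)$ CM points of discriminant $\Delta$ in $\mathcal{F}$ (equivalently, over representatives of $S_\Delta$ modulo $\SL_2(\Z)$). Hence
$$\height(x)=\frac{1}{\mathcal{C}(\Delta)}\sum_{\tau'}\log^+\!\bigl|j(\tau')\bigr|\;\geq\;\frac{1}{\mathcal{C}(\Delta)}\log^+\!\bigl|j(\tau_0)\bigr|,$$
where every term of the sum is non-negative and $\tau_0\in\mathcal{F}$ is the point attached to the principal form, namely $(1,0,|\Delta|/4)$ if $\Delta\equiv0\bmod4$ and $(1,1,(1+|\Delta|)/4)$ if $\Delta\equiv1\bmod4$; in both cases $\Im\tau_0=|\Delta|^{1/2}/2$, which is moreover the largest imaginary part occurring, since $\Im\tau(a,b,c)=|\Delta|^{1/2}/(2a)$ and $a\geq1$.

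It remains to bound $|j(\tau_0)|$ from below. Write $q=e^{2\pi i\tau_0}$, so that $|q|=e^{-2\pi\Im\tau_0}=e^{-\pi|\Delta|^{1/2}}\leq e^{-4\pi}$ because $|\Delta|\geq16$. Using $j(\tau)=q^{-1}+744+\sum_{n\geq1}c(n)q^n$ together with a crude tail estimate — for instance $c(n)\leq c(1)^n$ with $c(1)=196884$, whence $\sum_{n\geq1}c(n)|q|^n\leq c(1)|q|/(1-c(1)|q|)$ — one obtains $\bigl|j(\tau_0)-q^{-1}\bigr|\leq c_0$ for an explicit constant $c_0$ (one may take $c_0\leq750$). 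Consequently $|j(\tau_0)|\geq|q|^{-1}-c_0=e^{\pi|\Delta|^{1/2}}-c_0>1$, so $\log^+|j(\tau_0)|=\log|j(\tau_0)|$ and
$$\log\bigl|j(\tau_0)\bigr|\;\geq\;\pi|\Delta|^{1/2}+\log\!\bigl(1-c_0e^{-\pi|\Delta|^{1/2}}\bigr)\;\geq\;\pi|\Delta|^{1/2}+\log\!\bigl(1-c_0e^{-4\pi}\bigr)\;>\;\pi|\Delta|^{1/2}-0.01,$$
the last inequality because $c_0e^{-4\pi}<3\cdot10^{-3}$ (and the correction term only improves as $|\Delta|\to-\infty$). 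Dividing by $\mathcal{C}(\Delta)$ gives the stated bound.

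The only step needing any care is making the tail bound of the $q$-expansion explicit with a constant small enough to keep the final error below $0.01$; this is routine, and since the true error at the extreme case $|\Delta|=16$ is already of order $10^{-3}$, there is ample room even for a very wasteful estimate. The remaining ingredients — the height formula for algebraic integers, the identification of the conjugates of $j(\tau)$ with the class forms of $\Delta$, and the maximality of $\Im\tau_0$ — are standard facts of complex multiplication.
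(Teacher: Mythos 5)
Your proof is correct and follows essentially the same route as the source this statement is quoted from (the paper itself gives no proof, citing \cite{bilu2017no}, Proposition 4.1): since $x$ is an algebraic integer whose conjugates are the class-group orbit, keep only the archimedean contribution of the principal-form conjugate, whose imaginary part $|\Delta|^{1/2}/2$ is maximal, and control $j$ through its $q$-expansion, with the $-0.01$ absorbing the constant term and the tail. The only point to make fully rigorous is your asserted coefficient bound $c(n)\leq c(1)^{n}$; it is true (it follows from $c(n)<e^{4\pi\sqrt{n}}$, or one can instead use positivity of the $c(n)$ and evaluation at $\tau=i$ to get $c(n)\leq 449\,e^{2\pi n}$, or simply invoke the standard inequality $|j(\tau)|\geq e^{2\pi\Im\tau}-2079$ for $\tau\in\mathcal{F}$, which is what the cited proof uses), and in any case there is ample numerical room for the $0.01$.
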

	
	\begin{proposition}
		Let $x$ be a singular modulus of discriminant $\Delta$. 
		Then
		$$\height(x) \geq \dfrac{3}{\sqrt{5}}\log|\Delta| - 9.79;$$
		$$\height(x) \geq \dfrac{1}{4\sqrt{5}}\log|\Delta| - 5.93.$$
		\label{lowerbound2}
	\end{proposition}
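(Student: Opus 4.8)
The plan is to follow \cite{bilu2017no}, combining the explicit estimate of Proposition~\ref{lowerbound1} — which is strong exactly when $\mathcal{C}(\Delta)$ is small — with an arithmetic input that takes over when $\mathcal{C}(\Delta)$ is large. Write
$$\height(x)=\frac{1}{\mathcal{C}(\Delta)}\sum_{k}\log^+|j(z_k)|,$$
the sum running over the CM points $z_k\in\mathcal{F}$ of discriminant $\Delta$; if $z_k=\tau(a_k,b_k,c_k)$ for the corresponding reduced form, then $\Im z_k=|\Delta|^{1/2}/(2a_k)$. Since $x$ is a non-zero algebraic integer, $\log^+\ge\log$ at the archimedean places and $\log^+\ge0$ at the finite ones, so for every $\alpha_0\in\Z$
$$\height(x)\ \ge\ \height(x-\alpha_0)-\log(2\max\{1,|\alpha_0|\})\ \ge\ \frac{1}{\mathcal{C}(\Delta)}\log|\NN_{\Q(x)/\Q}(x-\alpha_0)|-\log(2\max\{1,|\alpha_0|\}),$$
using $[\Q(x-\alpha_0):\Q]=[\Q(x):\Q]=\mathcal{C}(\Delta)$. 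I would take $\alpha_0\in\{0,1728\}$, choosing it so that its discriminant ($-3$ or $-4$) is coprime to $\Delta$ — possible unless $6\mid\Delta$, which needs a small extra argument.

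If $\mathcal{C}(\Delta)\le c_0|\Delta|^{1/2}/\log|\Delta|$ for a suitable absolute $c_0$, then Proposition~\ref{lowerbound1} gives $\height(x)\ge(\pi|\Delta|^{1/2}-0.01)/\mathcal{C}(\Delta)\ge\tfrac{3}{\sqrt5}\log|\Delta|-9.79$, hence both asserted inequalities. So assume $\mathcal{C}(\Delta)$ is large. The conjugate $z_1$ of the principal form alone (with $\Im z_1=|\Delta|^{1/2}/2$, whence $\log^+|j(z_1)|\ge\pi|\Delta|^{1/2}-O(1)$ by the Fourier expansion of $j$) is no longer enough; what is needed is
$$\log|\NN_{\Q(x)/\Q}(x-\alpha_0)|\ \gg\ \mathcal{C}(\Delta)\log|\Delta|.$$
For this I would invoke the Gross--Zagier formula \cite{gross1984on} for the absolute norm of a difference of singular moduli: it writes $|\NN_{\Q(x)/\Q}(x-\alpha_0)|$, up to an explicit rational exponent coming from the roots of unity, as a product $\prod_{t^2<\Delta_{\alpha_0}\Delta}F\!\left(\tfrac{\Delta_{\alpha_0}\Delta-t^2}{4}\right)$, where $F(m)=\prod_{n\mid m}n^{\epsilon(m/n)}$ and $\epsilon$ is the genus character attached to $\Delta$. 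Taking logarithms turns the norm into an arithmetic sum; estimating it by partial summation and feeding in the class number formula $\mathcal{C}(\Delta)\asymp|\Delta|^{1/2}L(1,\chi_\Delta)$ produces a lower bound of the required order $\mathcal{C}(\Delta)\log|\Delta|$.

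Dividing by $\mathcal{C}(\Delta)$ and absorbing $\log(2\max\{1,|\alpha_0|\})$ and the $O(1)$ from the Fourier estimate into the additive constants then yields the two inequalities; the weaker constant $\tfrac{1}{4\sqrt5}$, with the larger error $5.93$, is what survives when only crude bounds are available in the non-coprime or non-fundamental cases of $\Delta$, for which the classical Gross--Zagier identity first has to be replaced by a suitable generalisation. I expect the main obstacle to be exactly this last step: turning the Gross--Zagier factorisation — which is built to detect the \emph{prime} decomposition of the norm — into an \emph{effective} lower bound for its size, of the correct order $\mathcal{C}(\Delta)\log|\Delta|$ and uniform in $\Delta$. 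The alternative route, through the Chowla--Selberg/Colmez formula that expresses $\tfrac{1}{\mathcal{C}(\Delta)}\sum_k\log|j(z_k)|$ in terms of $\log|\Delta|$ and $L'(1,\chi_\Delta)/L(1,\chi_\Delta)$, runs into the same difficulty in the form of an effective lower bound for $L'(1,\chi_\Delta)/L(1,\chi_\Delta)$, which is only harmless once the small-class-number range has been removed by Proposition~\ref{lowerbound1}.
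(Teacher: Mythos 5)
Your proposal does not establish the proposition; it reduces it to a statement you do not prove, and that statement is essentially the whole difficulty. The paper itself does not reprove these bounds at all: it quotes the first inequality from \cite[Proposition 4.3]{bilu2017no} and the second from \cite[Lemma 14 (ii)]{habegger2017six}, which are obtained there by direct, fully explicit work with the conjugates $j(z_k)$ and the reduced forms, not through norm formulas. Your splitting into the two regimes is fine: when $\mathcal{C}(\Delta)\le c_0|\Delta|^{1/2}/\log|\Delta|$, Proposition \ref{lowerbound1} indeed gives both inequalities (modulo checking the finitely many $|\Delta|<16$). But in the complementary regime everything hinges on the unproved claim $\log|\NN_{\Q(x)/\Q}(x-\alpha_0)|\ \ge\ c\,\mathcal{C}(\Delta)\log|\Delta|$ with an explicit $c$, uniformly in $\Delta$. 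In that regime $\mathcal{C}(\Delta)$ can be as large as $|\Delta|^{1/2+o(1)}$, so what you need is a lower bound for the norm of a difference of singular moduli of strength at least comparable to the paper's main theorem --- i.e.\ you are proposing to prove the proposition by first proving (a relative of) the hardest result in this circle of ideas. Extracting such a bound from the Gross--Zagier factorisation means proving that the sum of $\log F\bigl(\tfrac{\Delta_{\alpha_0}\Delta-t^2}{4}\bigr)$ is bounded below by a quantity of size $|\Delta|^{1/2}L(1,\chi_\Delta)\log|\Delta|$ with explicit constants; nothing in your sketch indicates how to do this, and you yourself flag it as the main obstacle. The same is true of the Chowla--Selberg/Colmez variant you mention.

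Two further points make the route as described unworkable in the stated generality. First, the classical Gross--Zagier formula requires coprime \emph{fundamental} discriminants, while the proposition is asserted for every discriminant $\Delta$ (in particular non-maximal orders, and $\Delta$ divisible by $6$, where neither $-3$ nor $-4$ is admissible); the needed generalisations (Lauter--Viray, Li) are substantial, and indeed the introduction of this very paper notes that the bounds obtained that way have unclear behaviour as $\Delta\to-\infty$ --- so calling this "a small extra argument" understates a genuine gap. Second, the numerical constants leave almost no slack for your reduction: with $\alpha_0=1728$ you lose $\log(2\cdot 1728)\approx 8.15$ out of the allowed $9.79$, so you would need an essentially optimal explicit lower bound for the norm, which is far beyond what a soft "$\gg$" argument can deliver. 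If you want a self-contained proof rather than the paper's citations, the realistic path is the one taken in \cite{bilu2017no} and \cite{habegger2017six}: explicit estimates for $\log^+|j(z_k)|$ in terms of the leading coefficients $a_k$ of the reduced forms, combined with explicit counting of forms, avoiding norms of differences altogether.
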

	\begin{proof}
		The first one see \cite[Proposition 4.3]{bilu2017no}, the second one see \cite[Lemma 14 (ii)]{habegger2017six}
	\end{proof}
	
	We can use the inequalilty $\height(x-\alpha) \geq \height(x) - \height(\alpha) - \log 2$ and the results above to give the lower bounds of $\height(x-\alpha)$ for an fixed $\alpha$.
	
	\section{Proof of Theorem \ref{main theorem} (1) \label{section proof1}}
	
	As the set-up in Section~\ref{section upperbound}, Proposition \ref{lowerbound1} and \ref{lowerbound2} allow us to give lower bounds of the height of $x-\alpha$:
	\begin{equation}
		\height(x-\alpha) \geq \height(x) - \height(\alpha) - \log 2 \geq \frac{\pi|\Delta|^{1/2}-0.01}{\mathcal{C}(\Delta)}  - \height(\alpha) - \log 2,
		\label{lower1}
	\end{equation}
	\begin{equation}
		\height(x-\alpha) \geq \height(x) - \height(\alpha) - \log 2 \geq \frac{3}{\sqrt{5}}\log|\Delta| - \height(\alpha) -9.79 - \log 2.
		\label{lower2}
	\end{equation}
	For Theorem~\ref{main theorem} (1), recall the upper bound of $x-\alpha$ in Corollary~\ref{upperbound3} (1) when $|\Delta| \geq 10^{14}$:
	\begin{equation}
		\height(x - \alpha) \leq \frac{8A\mathcal{C}(\Delta_\tau)}{d} + \log(\frac{A\mathcal{C}(\Delta_\alpha)|\Delta|^{1/2}}{d})  + 4\log|\Delta_\alpha| +0.33 +\frac{1}{d}\log|\NN_{K/\Q}(x-\alpha)|.
		\label{upper1}
	\end{equation}
	Throughout the proof of Theorem~\ref{main theorem} (1),  denote  the discriminant of a singular modulus $x = j(z)$ by $\Delta$, and we assume that $X = |\Delta| \geq \max\{e^{3.12}(\mathcal{C}(\Delta_\alpha)|\Delta_\alpha|^4e^{\height(\alpha)})^{3}, 10^{15}\cdot\mathcal{C}(\Delta_{\alpha})^6\}$. Hence $|\Delta| \geq |\Delta_\alpha|$, since $\height(\alpha) \geq 0$.

	\subsection{The main inequality}
	
	Recall that $A = F\log\max\{|\Delta|, |\Delta_\alpha|\} = F\log X$. Minding $0.01$ in (\ref{lower1}) we deduce from (\ref{upper1})  the inequality 
	$$\frac{8A\mathcal{C}(\Delta_\alpha)}{d} + \log(\frac{AX^{1/2}}{d})  + C +\frac{1}{d}\log|\NN_{K/\Q}(x-\alpha)| \geq Y $$
	where 
	$$C = \log(\mathcal{C}(\Delta_\alpha)) + 4\log|\Delta_\alpha| + \height(\alpha) + 1.04,$$
	$$Y = \max\{\dfrac{\pi X^{1/2}}{\mathcal{C}(\Delta)}, \dfrac{3}{\sqrt{5}}\log X - 9.78\}.$$
	
	We rewrite this as
	\begin{equation}
		\dfrac{8A\mathcal{C}(\Delta_\alpha)}{dY} + \dfrac{\log A + C}{Y} + \dfrac{\log(X^{1/2}/d)}{Y} + \dfrac{\log|\NN_{K/\Q}(x-\alpha)|}{dY} \geq 1.
		\label{inequality1}
	\end{equation}
	Note that $C > 3.11 > 0$, $\log A \geq 0 $ because $C \geq 4\log 7 + \frac{1}{4\sqrt{5}}\log7 -5.93 + 1.04 > 3.11$. Hence, we may replace $Y$ by $\frac{3}{\sqrt{5}}\log X -9.78$ in the second term of the left-hand side in  (\ref{inequality1}). Similarly, in the 1st term and 4th term we may replace $Y$ by $\pi X^{1/2}/\mathcal{C}(\Delta)$, and in the 3rd term we may replace $X^{1/2}/\mathcal{C}(\Delta)$ by $\pi^{-1}Y$. Notice that $d\geq \mathcal{C}(\Delta)$, we obtain 
	\begin{equation}
		\dfrac{8A\mathcal{C}(\Delta_\alpha)}{\pi X^{1/2}} + \dfrac{\log A + C}{\frac{3}{\sqrt{5}}\log X -9.78} + \dfrac{\log(\pi^{-1}Y)}{Y} + \dfrac{\log|\NN_{K/\Q}(x-\alpha)|}{\pi X^{1/2}} \geq 1.
		\label{inequality2}
	\end{equation}
	
	To obtain a lower bound of $\log|\NN_{K/\mathbb{Q}}(x-\alpha)|$, we will bound from above each of the three terms in its left-hand side. 
	
	From the results in \cite[Section 5.2 and Section 5.3]{bilu2017no}, we know that, when $X \geq 10^{15}$,
	\begin{equation}
		\log A \leq \frac{\log 2}{2}\frac{\log X}{\log\log X -c_1 - \log 2} + \log\log X,
		\label{logA}
	\end{equation}
	where $c_1 < 1.1713142.$
	
	\subsection{Bound the first term in (\ref{inequality2})}
	From above, easy to know that when $X \geq 10^{15}$, we have 
	$$\frac{\log(AX^{-1/2})}{\log X} \leq u_0(X),$$
	where
	$$u_0(X) = \frac{\log 2}{2}\frac{1}{\log\log X -c_1 - \log 2} + \frac{\log\log X}{\log X} - \frac{1}{2}$$
	which is decreasing for $X \geq 10^{15}.$ Hence
	$$\frac{\log(AX^{-1/2})}{\log X} \leq u_0(X) \leq u_0(10^{15}) \leq -0.1908,$$
	so
	$$\dfrac{8A\mathcal{C}(\Delta_\tau)}{\pi X^{1/2}} \leq \dfrac{8\mathcal{C}(\Delta_\tau)}{\pi} X^{-0.1908}\leq \dfrac{8}{\pi}\cdot 10^{15\cdot(-0.1908)} \leq 0.0035,$$
	since $X\geq \mathcal{C}(\Delta_\tau)^6\cdot 10^{15}$.
	
	\subsection{Bound the second term in (\ref{inequality2})}
	Obviously, by (\ref{logA})
	$$\dfrac{\log A + C}{\frac{3}{\sqrt{5}}\log X -9.78} \leq u_1(X)u_2(X),$$
	where 
	$$u_1(X) = \frac{\log 2}{2}\frac{1}{\log\log X -c_1 - \log 2} + \frac{\log\log X + C}{\log X},$$
	$$u_2(X) = (\frac{3}{\sqrt{5}} - \frac{9.78}{\log X})^{-1},$$
	which are decreasing for  $X \geq 10^{10}$.
	
	Since $X \geq e^{3.12}(\mathcal{C}(\Delta_\alpha)|\Delta_\alpha|^4e^{\height(\alpha)})^{3} = e^{3C}$, we have 
	$$\frac{\log\log X + C}{ \log X} \leq 0.6.$$
	Indeed, set $g(x) = \log x - 0.6x + C,$ which is decreasing for $x> 5/3$. Let $x_0 = 3C > 9.33 \geq 5/3$, since $C > 3.11$. Hence
	$$g(x) \leq g(x_0) = \log3 + \log C - 0.8C \leq  \log3 + \log(3.11) - 0.8 \cdot 3.11 <0.$$ 
	With this we have 
	$$u_1(X)u_2(X) \leq(\frac{\log 2}{2}\frac{1}{\log\log(10^{15}) -1.1713142 - \log 2} + 0.6)\cdot u_2(10^{15})< 0.7621.$$ 
	\subsection{Bound the third term in (\ref{inequality2})}
	For this term, we directly use the bound from \cite[subsection 5.5]{bilu2017no}
	$$\dfrac{\log(\pi^{-1}Y)}{Y} < 0.0672.$$
	
	\subsection{Summing up}
	We can combine the above estimates and bound $\dfrac{\log|\NN_{K/\mathbb{Q}}(x - \alpha)|}{\pi X^{1/2}}$ by 
	$$\dfrac{\log|\NN_{K/\mathbb{Q}}(x - \alpha)|}{\pi X^{1/2}} > 1 - (0.0035 + 0.7621 + 0.0672) = 0.1672,$$
	so 
	$$\log|\NN_{K/\mathbb{Q}}(x - \alpha)| > \dfrac{|\Delta|^{1/2}}{2}.$$
	
	\section{Proof of Theorem \ref{main theorem} (2) \label{section proof2}}
	
	As in the last section, we assume that $X = |\Delta| \geq 10^{15}$. By inequality~(\ref{lower1}), (\ref{lower2}) and Corollary~\ref{upperbound3} (2), we have 
	$$\dfrac{4A}{\mathcal{C}(\Delta)} + 2\log(\frac{AX^{1/2}}{\mathcal{C}(\Delta)})  + C +\dfrac{1}{\mathcal{C}(\Delta)}\log|\NN_{K/\Q}(x-1728)| \geq Y $$
	where 
	$$C = \height(1728) + \log 2 - 2.68 + 0.01= \log(3456) - 2.67 > 0,$$
	$$Y = \max\{\frac{\pi X^{1/2}}{\mathcal{C}(\Delta)}, \frac{3}{\sqrt{5}}\log X - 9.78\}.$$
	We rewrite this as
	\begin{equation*}
		\dfrac{4A}{\mathcal{C}(\Delta)Y} + \frac{2\log A + C}{Y} + \dfrac{2\log(X^{1/2}/\mathcal{C}(\Delta))}{Y} + \dfrac{\log|\NN_{K/\Q}(x-1728)|}{\mathcal{C}(\Delta)Y} \geq 1.
	\end{equation*}
	Hence,
	\begin{equation}
		\dfrac{4A}{\pi X^{1/2}} + \dfrac{2\log A + C}{\frac{3}{\sqrt{5}}\log X - 9.78} + \dfrac{2\log(\pi^{-1}Y)}{Y} + \dfrac{\log|\NN_{K/\Q}(x-1728)|}{\pi X^{1/2}} \geq 1.
	\end{equation}
	Using the similar method to estimate each term when $X \geq 10^{15}$, we have
	$$\dfrac{4A}{\pi X^{1/2}} < 0.0018,$$
	$$\dfrac{2\log A + C}{\frac{3}{\sqrt{5}}\log X - 9.78} < 0.7337,$$
	$$\dfrac{\log(\pi^{-1}Y)}{Y} < 0.0672,$$ 
	$$\dfrac{\log|\NN_{K/\Q}(x-1728)|}{\pi X^{1/2}} \geq 1-(0.0018+0.7337+ 2\cdot0.0672) = 0.1301.$$
	Hence,
	$$\log|\NN_{K/\Q}(x-1728)| \geq 0.1301\pi X^{1/2} \geq \frac{2|\Delta|^{1/2}}{5}.$$
	
	\section{Proof of Theorem \ref{main theorem} (3) \label{section proof3}}
	
	As before, we assume that $X = |\Delta| \geq 10^{15}$. By Proposition~\ref{lowerbound1}, Proposition~\ref{lowerbound2} and Corollary~\ref{upperbound3} (3), we have
	$$\frac{12A}{\mathcal{C}(\Delta)} + 3\log\frac{AX^{1/2}}{\mathcal{C}(\Delta)} - 3.76 + \frac{1}{\mathcal{C}(\Delta)}\log|\NN_{K/\Q}(x)| \geq Y,$$
	where
	\begin{equation*}
		Y = \max\{\frac{\pi X^{1/2}}{\mathcal{C}(\Delta)}, \frac{3}{\sqrt{5}}\log X - 9.78\},
	\end{equation*}
	
	We rewrite this as
	\begin{equation}
		\dfrac{12A/\mathcal{C}(\Delta)}{Y} + \dfrac{3\log A - 3.76}{Y} + \dfrac{3\log(X^{1/2}/\mathcal{C}(\Delta))}{Y} + \dfrac{\log|\NN_{K/\Q}(x)|/\mathcal{C}(\Delta)}{Y} \geq 1.
		\label{inequality4}
	\end{equation}
	Note that $3\log A-3.76 > 0$ because $A \geq \log X \geq \log(10^{15}) > 30$. Hence, we obtain 
	\begin{equation*}
		\frac{12A}{\pi X^{1/2}} + \frac{3\log A - 3.76}{\frac{3}{\sqrt{5}}\log X -9.78} + \frac{3\log(\pi^{-1}Y)}{Y} + \dfrac{\log|\NN_{K/\Q}(x)|}{\pi X^{1/2}} \geq 1.
		\label{inequality5}
	\end{equation*}
	
	From the results in \cite[Page 23 to Page 25]{bilu2017no}, we know that, when $X \geq 10^{15}$,
	$$AX^{-1/2} < 0.0014,$$
	$$\dfrac{3\log A - 3.76}{\frac{3}{\sqrt{5}}\log X -9.78} < 0.7734,$$
	$$\dfrac{\log(\pi^{-1}Y)}{Y} < 0.0672.$$
	We can combine the above estimates and bound $\dfrac{\log|\NN_{K/\Q}(x)|}{\pi X^{1/2}}$ by 
	$$\dfrac{\log|\NN_{K/Q}(x)|}{\pi X^{1/2}} > 1 - (12\pi^{-1}\cdot 0.0014 + 0.7734 + 3 \cdot 0.0672) > 0.019,$$
	so 
	$$\log|\NN_{K/\Q}(x)| > \frac{|\Delta|^{1/2}}{20}.$$
	
	\section{Proof of Corollary~\ref{bound for modular polynomials}}

	We know that the degree of $\Phi_m(X,Y)$ at $Y$ is $\deg_Y\Phi_m(X,Y) =\sigma_1(m)$. Assume that $\alpha = j(\tau)$, and $\{\gamma_1,\cdots, \gamma_{\sigma_1(m)}\}$ is a set of representatives of $\SL_2(\Z)\backslash D_m$. We set $\alpha_i = j(\gamma_i(\tau))$, which are also singular moduli. %We have $|\Delta_{\alpha_i}| \geq m^2|\Delta_\alpha|$ by definition of discriminant. Indeed,  let $f(x,y)=\begin{bmatrix}
%		x&y
%	\end{bmatrix}\begin{bmatrix}
%		a& b/2\\
%		b/2& c
%	\end{bmatrix}\begin{bmatrix}
%		x\\y
%	\end{bmatrix} = \begin{bmatrix}
%		x&y
%	\end{bmatrix}T\begin{bmatrix}
%		x\\y
%	\end{bmatrix} \in \Z[x,y]$ be a quadratic form with $\gcd(a,b,c)=1$ such that $f(\gamma_i(\tau), 1) = 0$, so its discriminant is $\Delta_{\alpha_i} = 4\det T$. Then $g(x,y) = \begin{bmatrix}
%		x&y
%	\end{bmatrix}{^{t}\gamma_i}T\gamma_i\begin{bmatrix}
%		x\\y
%	\end{bmatrix}\in\Z[x,y]$ and $g(\tau,1) = 0$, so there exists an integer $d$ such that $|\Delta_\alpha| = 4|\det(\frac{1}{d}{^{t}\gamma_i}T\gamma_i)| = (\frac{m}{d})^2|\Delta_{\alpha_i}| \leq m^2|\Delta_{\alpha_i}|$. %Then the roots of $\Phi_m(\alpha, Y) = 0$ are
	
	Let $L = K(\alpha_1,\cdots, \alpha_{\sigma_1(m)})$ and $K_i = \Q(x, \alpha_i)$. We have $[K_i(\alpha): K] \leq \deg_Y\Phi(X,Y)= \sigma_1(m)$, and 
	\[\NN_{K_i(\alpha)/\Q}(x-\alpha_i) = \NN_{K_i/\Q}(x-\alpha_i)^{[K_i(\alpha):K_i]}.\]
	Then, when $\Delta$ is large enough, by Theorem~\ref{main theorem}, we have
	\begin{align*}
		\log|\NN_{K/\Q}(\Phi_m(x,\alpha))| & = \log|\NN_{L/\Q}(\prod\limits_{i=1}^{\sigma_1(m)}(x-\alpha_i))^{1/[L:K]}|\\
		& = \frac{1}{[L:K]}\sum\limits_{i=1}^{\sigma_1(m)}\log|\NN_{L/\Q}(x-\alpha_i)|\\
		& = \frac{[L:K_i]}{[L:K]}\sum\limits_{i=1}^{\sigma_1(m)}\log|\NN_{K_i/\Q}(x-\alpha_i)|\\
		&= \frac{[K_i(\alpha):K_i]}{[K_i(\alpha):K]}\sum\limits_{i=1}^{\sigma_1(m)}\log|\NN_{K_i/\Q}(x-\alpha_i)|\\
		& \geq \frac{1}{\sigma_1(m)}\sum\limits_{i=1}^{\sigma_1(m)}\log|\NN_{K_i/\Q}(x-\alpha_i)|\\
		& >\frac{1}{\sigma_1(m)}\sum\limits_{i=1}^{\sigma_1(m)}\frac{|\Delta|^{1/2}}{20}\\
		& = \frac{|\Delta|^{1/2}}{20}.
	\end{align*}

	\section*{Acknowledgements}
	The research of the author was supported by the China Scholarship Council. The author thanks his supervisors Yuri Bilu and Qing Liu for their supports, especially the helpful discussions and valuable suggestions from Yuri Bilu about this paper. He thanks Gabriel Dill for pointing out some  inaccuracies. Also the author would like to thank the referee for carefully reading this paper and useful suggestions.

	\address

\begin{thebibliography}{}
		
		\bibitem[\protect\astroncite{Bilu et~al.}{2019}]{bilu2019separating}
		Bilu, Y., Faye, B., and Zhu, H. (2019).
		\newblock Separating singular moduli and the primitive element problem.
		\newblock {\em arXiv preprint arXiv:1903.07126}.
		
		\bibitem[\protect\astroncite{Bilu et~al.}{2018}]{bilu2017no}
		Bilu, {\relax Yu}., Habegger, P., and K{\"u}hne, L. (2018).
		\newblock {No Singular Modulus Is a Unit}.
		\newblock {\em Int. Math. Res. Not. IMRN}.
		\newblock rny274.
		
		
		\bibitem[\protect\astroncite{Campagna}{2020}]{campagna2020singular}
		Campagna, F. (2020).
		\newblock On singular moduli that are $S$-units.
		\newblock {\em Manuscripta Math., to appear, arXiv e-prints}, page
		arXiv:1904.08958, April 2019.
		
		
		\bibitem[\protect\astroncite{Faye and Riffaut}{2018}]{faye2018fields}
		Faye, B. and Riffaut, A. (2018).
		\newblock Fields generated by sums and products of singular moduli.
		\newblock {\em J. Number Theory}, 192:37--46.
		
		\bibitem[\protect\astroncite{Gross and Zagier}{1985}]{gross1984on}
		Gross, B.~H. and Zagier, D.~B. (1985).
		\newblock On singular moduli.
		\newblock {\em J. Reine Angew. Math.}, 355:191--220.
		
		\bibitem[\protect\astroncite{Habegger}{2015}]{habegger2015singular}
		Habegger, P. (2015).
		\newblock Singular moduli that are algebraic units.
		\newblock {\em Algebra Number Theory}, 9(7):1515--1524.
		
		\bibitem[\protect\astroncite{Habegger et~al.}{2017}]{habegger2017six}
		Habegger, P., Jones, G., and Masser, D. (2017).
		\newblock Six unlikely intersection problems in search of effectivity.
		\newblock {\em Math. Proc. Cambridge Philos. Soc.}, 162(3):447--477.
		
		\bibitem[\protect\astroncite{Herrero et~al.}{2021}]{herrero2021there}
		Herrero, S., Menares, R., and Rivera-Letelier, J. (2021).
		\newblock There are at most finitely many singular moduli that are $S$-units
		\newblock {\em arXiv preprint arXiv:2102.05041}.
		
		\bibitem[\protect\astroncite{Li}{2018}]{li2018singular}
		Li, Y. (2018).
		\newblock Singular units and isogenies between cm elliptic curves.
		\newblock {\em arXiv preprint arXiv:1810.13214}.
		
	\end{thebibliography}
\end{document}